\date{}
\title[Likelihood-Based Root State Reconstruction on a Tree]{Likelihood-Based Root State Reconstruction on a Tree: Sensitivity to Parameters and Applications}
\author{%David Clancy, Jr., Hanbaek Lyu, Sebastien Roch
}
\newenvironment{customassumption}[1]
{\innercustomassumption}
{\endinnercustomassumption}
\DeclareMathOperator*{\argmax}{arg\,max}
\newtheorem{theorem}{Theorem}[section]
\newtheorem{claim}[theorem]{Claim}
\newtheorem{lemma}[theorem]{Lemma}
\newtheorem{corollary}[theorem]{Corollary}
\theoremstyle{definition}
\newtheorem{def1}[theorem]{Definition}
\newtheorem{rmk1}[theorem]{Remark}
\newcommand{\E}{\mathbb{E}}
\newcommand{\PR}{\mathbb{P}}
\newcommand{\R}{\mathbb{R}}
\newcommand{\Z}{\mathbb{Z}}
\newcommand{\eps}{\varepsilon}
\renewcommand{\PR}{\mathbb{P}}
\newcommand{\param}{{\boldsymbol{\theta}}}
\newcommand{\Param}{{\boldsymbol{\Pi}}}
\newcommand{\htheta}{\hat{\theta}}
\newcommand{\hparam}{{\hat{\param}}}
\newcommand{\hParam}{{\widehat{\Param}}}
\begin{document}
	
	\author{David Clancy, Jr.}
	\address{David Clancy, Jr., Department of Mathematics, University of Wisconsin - Madison, WI, 53717, USA}
	\email{\texttt{dclancy@math.wisc.edu}}

	\author{Hanbaek Lyu}
	\address{Hanbaek Lyu, Department of Mathematics, University of Wisconsin - Madison, WI, 53717, USA}
	\email{\texttt{hlyu@math.wisc.edu}}

	\author{Sebastien Roch}
	\address{Sebastien Roch, Department of Mathematics, University of Wisconsin - Madison, WI, 53717, USA}
	\email{\texttt{roch@wisc.edu}}
	
	\author{Allan Sly}
	\address{Allan Sly, Department of Mathematics, Princeton University, 
		Princeton, NJ, 08540, USA}
	\email{\texttt{asly@princeton.edu}}

\begin{abstract}
We consider a broadcasting problem on a tree where a binary digit (e.g., a spin or a nucleotide's purine/pyrimidine type) is propagated from the root to the leaves through symmetric noisy channels on the edges that randomly flip the state with edge-dependent probabilities.
The  goal of the reconstruction problem is to infer the root state given the observations at the leaves only.
Specifically, we study the sensitivity of maximum likelihood estimation (MLE) to uncertainty in the edge parameters under this model, which is also known as the Cavender-Farris-Neyman (CFN) model. Our main result shows that when the true flip probabilities are sufficiently small, the posterior root mean (or magnetization of the root) under estimated parameters (within a constant factor) agrees with the root spin with high probability and deviates significantly from it with negligible probability. This provides theoretical justification for the practical use of MLE in ancestral sequence reconstruction in phylogenetics, where branch lengths (i.e., the edge parameters) must be estimated. As a separate application, we derive an approximation for the gradient of the population log-likelihood of the leaf states under the CFN model, with implications for branch length estimation via coordinate maximization.    
\end{abstract}
\maketitle

	\section{Introduction}
    
   We consider the following broadcasting problem on a tree $T=(V,E)$. Information is transmitted from the root $\rho$ to the leaves $L$ through the edges, which function as noisy channels. More formally, information is propagated according to a stochastic process where the root $\rho$ is initially assigned a spin $\sigma_\rho\in \{\pm 1\}$ uniformly at random and the spins are transmitted down toward the leaves, where each edge $e=(u,v)$ of $T$ randomly flips the sign of the spin with probability $p_e^*$, or put differently according to the following transition matrix:
    \begin{equation*}
        \begin{bmatrix}
			1-p^*_e & p^*_e\\
			p^*_e &1-p^*_e
		\end{bmatrix}
        =
        \frac{1}{2}
        \begin{bmatrix}
			1+\theta^*_e & 1-\theta^*_e\\
			1-\theta^*_e & 1+\theta^*_e
		\end{bmatrix}.
	\end{equation*}
    where it can be checked that $\theta^{*}_{e}$, a useful alternative parameterization of the model, is equal to the second eigenvalue of the transition matrix.
    The goal is to infer the root spin $\sigma_\rho$ given the spins $\{\sigma_x, x\in L\}$, at the leaves. 
     We assume that $p_e^* \in (0,1/2)$, or equivalently that $\theta^*_e \in (0,1)$, meaning that it is more likely that spins do not flip. %(and corresponds to the ``ferromagnetic'' case). 
     We will refer to the collection $\param^{*} = (\theta^*_e: e\in E)$ as the ``true parameter'' of the model.

This so-called ``reconstruction problem'' 
has been extensively studied. 
Two competing forces impact the
reconstructibility of the root state, leading to a phase transition: 
the noise on the edge channels produce a loss of information along the paths to the leaves; 
on the other hand, the branching duplicates the signal and creates a large number of correlated noisy processes to the leaves. In the special case where $T$ is a complete $b$-ary tree and $\theta_e^* = \theta$ for all $e$, a classical result states that, as the number of levels goes to infinity, there exists a root state estimator with success probability bounded below from $1/2$ if and only if $b \theta^2 > 1$~\cite{BlRuZa:95,Ioffe:96a,kesten1966additional}. See, e.g.,~\cite[Section 6.3.2]{Roch_2024} for an account of the proof. This is known as the Kesten-Stigum (KS) bound and has been generalized in a number of ways (e.g., \cite{bhatnagar2011reconstruction,BCMR.06,EvKePeSc:00,FanRoch:18,JansonMossel:04,makur_broadcasting_2020,Mossel:01,Mossel:04a,MosselPeres:03,Sly:11}). 
See also~\cite{mossel:22} for a recent survey and some applications.  

The present work is motivated in part by applications in biology, where the model above is called the Cavender-Farris-Neyman (CFN) model \cite{cavender1978taxonomy, farris1973probability, neyman1971molecular} and is used to study molecular evolution on an evolutionary tree. Here the transformed parameters $l^*_e = -\ln \theta_e^*$ are known as branch lengths.
The name ``branch length'' comes from the fact that one can view the flip probability $p^*_{e}$ as resulting from a continuous-time constant rate of mutation along the edge $e$ for an amount of time proportional to $l^*_{e}$. In that context the KS bound pertains to the reconstruction of ancestral sequences and has important implications for tree inference as well (e.g.,~\cite{DMR.11,Roch:10,Mossel:04a,RochSly:17}). 
Several different methods for ancestral reconstruction are used in the phylogenetics literature. Perhaps most common is the maximum likelihood estimator (MLE), which in this case boils down to estimating the root spin by the state with highest posterior probability given the observations at the leaves. Put differently, one infers a state according to the sign of the posterior root mean, also called magnetization of the root (i.e., the conditional bias under $\PR_{\hparam}$ given the leaf spins, see Def. \ref{def:magnetization}).  

That root state estimator has a flaw -- it depends on the branch lengths, which must be separately estimated. The central question we ask in this paper is the following:
\emph{How sensitive is the MLE to deviations in the branch lengths?} In a nutshell we show that, if the true parameter $\param^{*}$ is sufficiently deep inside the KS reconstructibility regime and the estimated parameter $\hparam$ is within a constant factor, the magnetization of the root agrees with the root spin with high probability and deviates significantly from it with negligible probability.

    Our results are not the first ones to show that a root state estimator is accurate without the precise knowledge of the true parameter. Indeed, for the CFN model on a binary tree, an estimator known as ``recursive majority'' does not use the branch lengths at all~\cite{Mossel:04a}; it also works all the way to the KS threshold $1/\sqrt{2}\approx 0.707$ (i.e., whenever $\theta_e^* \leq 1/\sqrt{2}$ for all $e$). But, unlike the MLE, it is \emph{not} used in practice. Our current bound for the MLE holds whenever
    \begin{align*}
        \hparam,\param^*\in [1-\delta, 1- \delta/2]^{E(T)}\qquad\textup{and}\qquad \delta\le \frac{1}{1190}.
    \end{align*} 
    See Remark \ref{rmk:constants}. This is quite far off from the KS bound. The extent to which this
    can be improved is an interesting open problem. 

Our results also have further implications in phylogenetics. 
The related branch length estimation problem 
is to recover the unknown true parameter $\param^{*}$ from repeated, independent observations of the states at the leaves of the tree under the true model $\PR_{\param^{*}}$. It is of practical interest
	whether standard optimization algorithms for the maximum likelihood approach, e.g., coordinate ascent~\cite{guindon2003simple}, can %solve the branch-length estimation problem. 
    accurately estimate the true parameter $\param^{*}$.
    Our new tail bounds on the magnetization provide new understanding of the likelihood landscape of the CFN model. This is because the gradient of the log-likelihood of the leaf states under the hypothetical model $\PR_{\hparam}$ admits a simple expression in terms of the magnetizations (see Lem. \ref{lem:derivative}). 
    As an application of our main result, we derive an approximation for the gradient of the population log-likelihood of the CFN model %vanishes at the true parameter $\param^{*}$ up to a small error 
    (Thm. \ref{thm:gradient}). 
    %This result implies that indeed the coordinate maximization algorithm succeeds only after a single round in the population limit up to a small error (Cor. \ref{cor:coord_ini}). While this result does not directly confirm the speculation that coordinate maximization in the finite-sample setting would succeed, we will establish such a finite-sample guarantees in our forthcoming papers. 
    Deeper results on the Hessian of the population log-likelihood in the neighborhood of the true parameter and an analysis of coordinate maximization in a finite-sample setting will be detailed in forthcoming papers.

   %even without knowing the flip probabilities $(p_{e}; e\in E)$. One often considers a sequence of trees growing in size obtained by truncating an infinite rooted tree $\mathcal{T}$ at growing levels and asks if the reconstruction problem can be solved asymptotically. 

    \begin{figure}[!ht]
    	\centering
    	\includegraphics[width=.8\textwidth]{./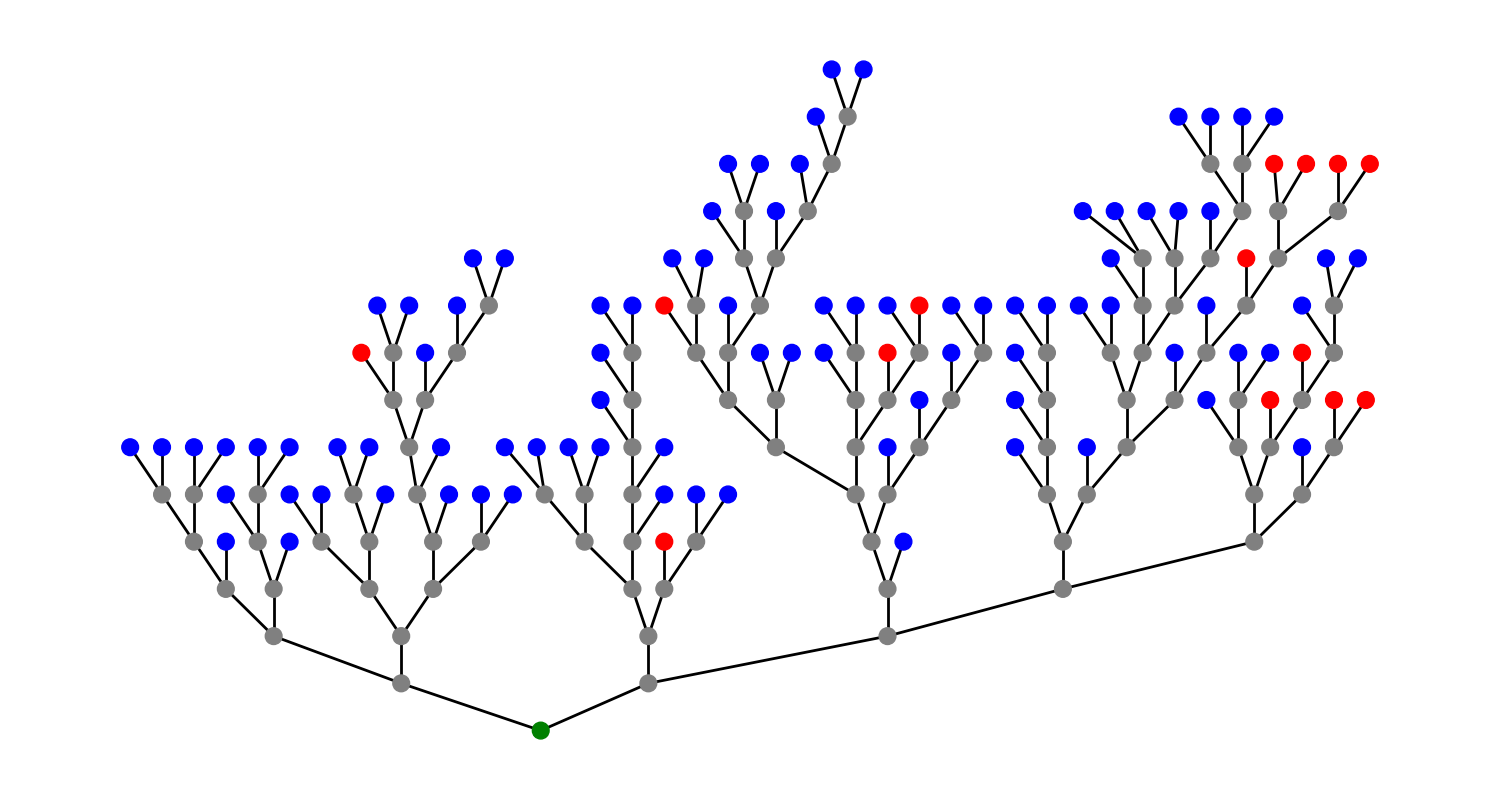}
    	\caption{Sample spin configuration on the leaves of a rooted binary tree $T$ with $n = 100$ leaves. The root is in green, blue leaves have a state $+1$ and red leaves have a state $-1$. The parameters $\theta^*_e, \htheta_e \overset{i.i.d.}{\sim}\textup{Unif}([0.9,0.95])$ and the root magnetization $Z_u \approx 0.997$.}
    	\label{fig:SignalReconstruct}
    \end{figure}

	\subsection{Overview and organization}

    This paper is organized as follows. In Section \ref{sec:main_results}, we first state the assumptions (\ref{assumption1}) and our main results (Thm.~\ref{thm:Robust}) on reconstruction. We then state an application to the gradient of the population log-likelihood (Thm.~\ref{thm:gradient}) and a corollary (Cor.~\ref{cor:coord_ini}) of on coordinate maximization for estimating the true parameter. In the next section, we give a proof of Theorem \ref{thm:Robust}. %Section \ref{sec:MagLemmas}, we prove a preliminary lemma about the conditional independence properties of the unsigned magnetization. 
    In Section \ref{sec:keyLemmas}, we prove the application results. %Lemma \ref{lem:derivative}, Theorem \ref{thm:gradient}, and Corollary \ref{cor:coord_ini}. 

	\section{Statement of main results}
	\label{sec:main_results}
	
	In this section, we formally state the assumptions and main results.  Throughout the paper, constants are numbered by the equation in which they first appear.

	\subsection{Assumptions} 

    We assume that the tree $T = (V,E)$ is a finite, unrooted binary tree, that is, all its internal (i.e., non-leaf vertices) vertices have degree $3$.
    %is a finite subtree of the infinite rooted binary tree. By contracting edges incident to degree-2 nodes, we will assume without loss of generality that $T$ consists of nodes with degree 1 or 3. 
    We will denote the number of leaves in $T$ by $n$. It can be checked that there is then $2n-3$ vertices in $T$ overall. 
    
	%Note that $p_e = \PR_{\param^{*}}(\sigma_u \neq \sigma_v)$ and the stationary distribution of the Markov chain on $T$  is given by $\PR_{\param^{*}}(\sigma_v = 1) = \PR_{\param^{*}} (\sigma_v = -1) = \frac{1}{2}$.  

	We introduce the parameter $\delta>0$ to control the scale of the mutation (or flip) probabilities along the edges. Our analysis operates under the assumption that we are well within the reconstruction regime, that is, that mutation probabilities are sufficiently small that ancestral states can be reconstructed with better-than-random accuracy.
		 We introduce the following restricted parameter spaces that depend on $\delta$. 

    \begin{def1}[Restricted parameter spaces]\label{def:parameter_spaces}
        Let $0 < c_{\ref{eqn:pHatBounds}} < c_{\ref{eqn:pBounds}} < C_{\ref{eqn:pBounds}} < C_{\ref{eqn:pHatBounds}} < 1$ be fixed constants. For $\delta>0$, define two subsets $\Param_{0}(\delta)\subseteq \hParam_{0}(\delta)\subset[-1,1]^{E}$  
        		by 
        		\begin{align}
        			\Param_{0}(\delta) &:=\left\{ (\theta^*_{e}=1-2p_{e}\,;\, e\in E) \,\bigg|\, c_{\ref{eqn:pBounds}} \delta \le p_e \le C_{\ref{eqn:pBounds}}\delta %\le \frac{1}{2380}
        			\textup{  $\forall$}e\in E \right\}=[1-2C_{\ref{eqn:pBounds}}\delta, 1-2c_{\ref{eqn:pBounds}}\delta]^{E}, \label{eqn:pBounds} \\
        			\hParam_{0}(\delta) &:=\left\{ (\htheta_{e}=1-2\hat{p}_{e}\,;\, e\in E) \,\bigg|\, c_{\ref{eqn:pHatBounds}}\delta \le \hat{p}_e \le C_{\ref{eqn:pHatBounds}} \delta %\le \frac{1}{2}
        			\textup{  $\forall$}e\in E\right\} = [1-2C_{\ref{eqn:pHatBounds}}\delta, 1-2c_{\ref{eqn:pHatBounds}}\delta]^{E}. \label{eqn:pHatBounds}
        		\end{align}
    \end{def1}

    %As noted above, we will only have access to the spins at the leaves of the phylogenetic tree. In order to bootstrap the spins at the leaves to spins in the interior that are independent of the size of the tree, we need certain recursive relationships to hold. 

    We assume the (unknown) true parameter $\param^{*}$ belongs to the $\delta$-box $\Param_{0}(\delta)$ and that the estimated parameter $\hparam$ at hand that we use to infer the root spin from the leave spins belongs to the larger $\delta$-box $\hParam_{0}(\delta)$. This assumption is stated in \ref{assumption1} below. 
    
	\begin{customassumption}{A1}[Parameter regime]\label{assumption1} 
		Assume that $\param^{*}\in \Param_{0}(\delta)$ and $\hparam\in \hParam_{0}(\delta)$. Moreover, the constants $c_{\ref{eqn:pHatBounds}}, C_{\ref{eqn:pHatBounds}}$ satisfy $C_{\ref{eqn:pHatBounds}}\ge 2c_{\ref{eqn:pHatBounds}}$.
	\end{customassumption}
	
% We will frequently say that $\param^{*}$ or $\hparam$ satisfy \eqref{eqn:pBounds} and \eqref{eqn:pHatBounds}, respectively, which means that these parameters belong to the sets defined in these equations. 

\subsection{Posterior-based ancestral state reconstruction: insensitivity to parameters}

%$\Omega(1)$ 
%\DC{I don't think this statement is correct. A moderate failure can still be off by order 1 (flip to a single child). Maybe replace ``off by $\Omega(1)$'' with ``off `significantly'\,''? }
%with probability $O(\delta^{2})$, which we refer to as ``anti-reconstruction.'' 
%Our first result states that under the assumption that $\param^{*}$ (resp. $\hparam$) satisfies \eqref{eqn:pBounds} (resp. \eqref{eqn:pHatBounds}), the magnetization $Z_u$ is a good proxy for the true signal $\sigma_u$ at the root at least for some conditions on the constants.

Fix two distinct nodes $u,v$ in $T$. We call a node $w$ a \textit{descendant} of $u$ with respect to node $v$ if the shortest path between $w$ and $v$ contains $u$. The \textit{descendant subtree at $u$} with respect to $v$ is the subtree $T_{u}$ rooted at $u$ consisting of all descendants of $u$ with respect to $v$. A subtree of $T$ rooted at $u$ is a \textit{descendant subtree of $u$} if it is a descendant subtree of $u$ with respect to some node $v$.

	Our central quantity of interest is the posterior distribution of the state of an internal node given the leaf states in a descendant subtree. Because the states take only two values here, it is enough to consider the \emph{posterior mean}, which is often referred to as ``magnetization.''  
 
 Roughly speaking, the magnetization  %$Z_{u}$ 
 at a node $u$ with respect to a descendant subtree $T_{u}$ rooted at $u$ is the ``bias'' on its spin after observing all spins at the leaves of the descendant subtree $T_{u}$. For instance, if all spins on the leaves of $T_{u}$ are $+1$, then $u$ will be quite likely to have a $+1$ spin as well. The formal definition of the magnetization is given below and see Figure \ref{fig:SignalReconstruct} for a
 concrete example.
 %approximation of $Z_u$ for a fixed tree.
	\begin{def1}[Magnetization]\label{def:magnetization}
		Let $T_{u}$ be a descendant 
		subtree of $T$ rooted at a node $u$. Let $L_{u}$ denote the set of all leaves in $T_{u}$. For a generic parameter $\hparam \in [-1,1]^{E(T_u)}$ and fixed spin configuration $\sigma_{L_u}\in \{\pm 1\}^{L_{u}}$
  %$\tau\in \{\pm\}^{L_{u}}$ 
  on the leaves of $T_u$, define the magnetization at the root $u$ of $T_{u}$ under $\hparam$ as
		\begin{align}\label{eqn:def_magnetization}
			%Z_u^\hparam(\tau_{L_{u}}) = 
   Z^{\hparam,T_u}_u (\sigma_{L_{u}}):=
			\PR_\hparam(\hat{\sigma}_u = +1\, |\,  \hat{\sigma}_{L_{u}} = \sigma_{L_{u}}) - \PR_\hparam(\hat{\sigma}_u=-1\, |\,  \hat{\sigma}_{L_{u}} = \sigma_{L_{u}}),
		\end{align}
		where $\hat{\sigma}$ is a random spin configuration on $T$  sampled from $\PR_{\hparam}$. 
		Furthermore, if $\sigma$ is a random spin configuration sampled from $\PR_{\param^{*}}$, we consider the random variable 
		\begin{align*}
  Z_{u}^{\hparam,T_{u}}:=Z_{u}^{\hparam, T_{u}}(\sigma_{L_{u}}). 
		\end{align*}
  We write this random variable simply as $Z_u$ when $\hparam,T_{u}$ are clear from the context. 
	\end{def1}

     Recall that our overarching goal is to infer the unobserved spin $\sigma_u$ under $\PR_{\param^{*}}$ given the observed leaf spins $\sigma_{L_{u}}$ on a descendant subtree $T_u$ of a known tree $T$. The estimator we seek to analyze is simply the magnetization $Z_u^{\hparam,T_{u}}$ %. by viewing the entire tree $T$ as a descendant subtree $T_{\rho}$ and 
    \emph{using the estimated parameter $\hparam$}. We think of the latter as being deterministic and obtained externally---in particular, they do not depend on the observations. Fix $u$, $T_u$, and $\hparam$. Roughly, we wish to establish conditions under which %This gives 
    the following random approximation holds
    \begin{align}
        \sigma_u \approx Z_u. 
    \end{align}
    Since $\sigma_u \in \{\pm 1\}$ and $Z_u \in [-1,1]$, the accuracy of the inference above can be quantified by the following quantity 
    \begin{align}
        \sigma_u Z_u \in [-1,1]. 
    \end{align}
    which we refer to as ``unsigned magnetization.'' If this is close to 1, then the estimation is accurate; if it is close to $-1$, then it gives almost the opposite answer. 
    %Our main result, Theorem \ref{thm:Robust}, states that the above estimation scheme is accurate with large probability, where the likelihood of it being accurate and not being accurate by a large error are both 

	%Note that the magnetization $Z_{u}$ is a random variable determined by the spin configuration $\sigma_{L_{u}}$ on the leaves of $T_{u}$ and takes values in $[-1,1]$. It is important to note that m
 Magnetizations are straightforward to compute efficiently even for a large tree by using a recursive formula %due to Borgs, Chayes, Mossel, and Roch 
 (see, e.g., \cite{BCMR.06}). The idea is to compute the magnetizations from the leaves of $T_u$ and work towards $u$. %Again, fix $u$, $T_u$, and $\hparam$.

    \begin{enumerate}
        \item (\textit{Base step}) For each leaf $\ell$ in $T_u$, set $Z_\ell=\sigma_\ell$ (since we get to observe the spin at $\ell$).
        \vspace{0.1cm}

        \item (\textit{Induction step}) Suppose $T_{v}$ is the descendant subtree of node $v$ with respect $u$ and let $w_1, w_2$ be its two children in $T_{v}$. There are corresponding descendant subtrees rooted at these nodes with respect to $u$, which defines the magnetization at these nodes, $Z_{w_1}$ and $Z_{w_2}$. %after removing the edges $\{u,v\}$ and $\{u,w\}$ from the edges of $T_u$. 
Then \cite[Lemma 4 and 5]{BCMR.06} imply that %, under $\PR_{\hparam}$, %\snote{Have to make it clear that it's under $\PR_\hparam$.}
\begin{equation}\label{eqn:recursionBorg}
	Z_v = \frac{\hat{\theta}_{w_1}Z_{w_1} + \hat{\theta}_{w_2} Z_{w_2}}{1+ \hat{\theta}_{w_1}\hat{\theta}_{w_2} Z_{w_1} Z_{w_2}}, 
\end{equation} 
where $\hat{\theta}_{w_1} := \hat{\theta}_{\{v,w_1\}}$ for the edge $\{v,w_1\}$ and similarly for $w_2$.
    \end{enumerate}

Now we state our main result in this work, Theorem \ref{thm:Robust}. Roughly speaking, it states that for every node $u$ in $T$, under $\PR_{\param^{*}}$, its spin $\sigma_u$ is close to its magnetization $Z_{u}$ computed with respect to any descendant subtree $T_{u}$ under estimated parameters satisfying certain conditions. More precisely, $Z_{u}$ is ``close'' to the spin $\sigma_{u}$ at $u$ with probability $1-O(\delta)$ irrespective of the parameters used as long as they are in the ``right range,'' and it will be off ``moderately'' with probability $\Theta(\delta)$ and ``severely'' with probability $O(\delta^{2})$. In the statement, recall that $\PR_{\param^{*}}$ indicates that the random spin configuration $\sigma_{L_{u}}$ on the leaves in $L_{u}$ is generated by the model with parameter $\param^{*}$.

	%The magnetization $Z_{u}$ at a node $u$ depends implicitly on the choice of the descendant subtree $T_{u}$. Magnetizations will be used to describe the derivatives of the log-likelihood function in the mutation probabilities on the edges. Accordingly, we introduce canonical decompositions of the tree $T$ with respect to a single edge, which yields an unambiguous choice of the descendant subtrees of all nodes. 

%While the magnetizations $Z_u$ depend intricately on the actual descendent subtree and the leaf spins $\sigma_{L_u}$, we establish in Theorem \ref{thm:Robust} that, to a first approximation they are simply the root spin $\sigma_u$ with large probability. This is the main result of the present paper. 

\begin{theorem}[Insensitivity of magnetization to parameters]\label{thm:Robust} There exist constants $\delta_{\ref{eqn:antiReconstruction}}, 
c_{\ref{eqn:Reconstruct}},C_{\ref{eqn:Reconstruct}}, c_{\ref{eqn:antiReconstruction}}, C_{\ref{eqn:antiReconstruction}}>0$ depending only on the constants in \ref{assumption1} such that the following holds for any unrooted binary tree $T$ and $\delta\le \delta_{\ref{eqn:antiReconstruction}}$. 
	Fix a descendant subtree $T_{u}$ of a node $u$, and let $L_{u}$ denote the set of all leaves in $T_{u}$ and suppose that $\hparam\in \hParam_0(\delta)$ and $\param\in \Param_0(\delta)$. 
    \vspace{0.1cm}
	\begin{description}
		\item[(i)] (\textit{Upper tail}) 
        \vspace{-0.6cm} 
		\begin{equation}\label{eqn:Reconstruct}
			\PR_{\param^{*}}\left(\sigma_u Z_u^{\hparam, T_{u}}(\sigma_{L_{u}}) 
   \ge 1-C_{\ref{eqn:Reconstruct}} \delta^2  \right) \ge 1- c_{\ref{eqn:Reconstruct}}\delta. 
		\end{equation} 
        \vspace{0.1cm}

		\item[(ii)] (\textit{Lower tail}) 
		\vspace{-0.6cm}
    \begin{equation}\label{eqn:antiReconstruction}
			\PR_{\param^{*}} \left(\sigma_u Z_u^{\hparam, T_{u}}(\sigma_{L_{u}}) 
   \le - c_{\ref{eqn:antiReconstruction}}  \right) \le C_{\ref{eqn:antiReconstruction}}\delta^2.
		\end{equation}
	\end{description}
\end{theorem}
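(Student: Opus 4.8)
The plan is to reduce both tails to a single deterministic lemma about the recursion \eqref{eqn:recursionBorg}, fed into a pair of coupled recursive probability bounds propagated from the leaves of $T_u$ up to $u$.

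\textbf{Reduction and the signed recursion.} By the spin-flip symmetry of the CFN channel it is enough to control the \emph{unsigned magnetization} $W_v := \sigma_v Z_v^{\hparam,T_v}\in[-1,1]$ at each node $v$ of $T_u$; at the leaves $W_\ell=\sigma_\ell^2=1$. Writing $\eta_i:=\sigma_v\sigma_{w_i}\in\{\pm1\}$ for the flip indicator across the edge from $v$ to a child $w_i$ and $a_i:=\eta_i\hat\theta_{w_i}$, the recursion \eqref{eqn:recursionBorg} reads $W_v=(a_1W_{w_1}+a_2W_{w_2})/(1+a_1a_2W_{w_1}W_{w_2})$, which I would use through the factorized identities
\begin{equation*}
1-W_v=\frac{(1-a_1W_{w_1})(1-a_2W_{w_2})}{1+a_1a_2W_{w_1}W_{w_2}},\qquad 1+W_v=\frac{(1+a_1W_{w_1})(1+a_2W_{w_2})}{1+a_1a_2W_{w_1}W_{w_2}}.
\end{equation*}
Two probabilistic facts are used repeatedly: under $\PR_{\param^*}$, conditionally on $\sigma_v$ the two descendant subtrees at $w_1,w_2$ are independent, and within each subtree $W_{w_i}$ is independent of the flip $\eta_i$ (again by symmetry). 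Under \ref{assumption1} each flip has probability $p^*_{w_i}=\Theta(\delta)$ and $1-\hat\theta_{w_i}=2\hat p_{w_i}=\Theta(\delta)$.

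\textbf{Deterministic propagation lemma.} Call $v$ ``$\delta^2$-good'' if $1-W_v\le K_2\delta^2$, ``$\delta$-good'' if $1-W_v\le K_1\delta$, and ``safe'' if $1+W_v\ge\epsilon_0$, for constants to be fixed. Using the identities above with $1-\hat\theta=\Theta(\delta)$, I would establish three implications: (a) if both children are $\delta$-good and neither edge flips, then the numerator of $1-W_v$ is a product of two $\Theta(\delta)$ factors while the denominator is bounded below, so $v$ is $\delta^2$-good (the deficit gets \emph{squared}); (b) if one child is $\delta$-good with no flip on its edge and the other child is merely safe, then $v$ is $\delta$-good; and (c) if one child is $\delta$-good with no flip on its edge, then monotonicity of the M\"obius map $t\mapsto(a_1W_{w_1}+t)/(1+a_1W_{w_1}t)$ in $t=a_2W_{w_2}\in[-\hat\theta_{w_2},\hat\theta_{w_2}]$ forces $W_v\ge -\tfrac{C_{\ref{eqn:pHatBounds}}-c_{\ref{eqn:pHatBounds}}}{C_{\ref{eqn:pHatBounds}}+c_{\ref{eqn:pHatBounds}}}-O(\delta)$, so $v$ is safe once $\epsilon_0$ (equivalently $c_{\ref{eqn:antiReconstruction}}$) is chosen using $C_{\ref{eqn:pHatBounds}}\ge 2c_{\ref{eqn:pHatBounds}}$ to keep this value strictly above $-1$.

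\textbf{Coupled probabilistic recursion and conclusion.} Let $q_v=\PR_{\param^*}(v\text{ not }\delta\text{-good})$, $r_v=\PR_{\param^*}(v\text{ not safe})$, and $\pi:=\max_e p_e^*=\Theta(\delta)$. Implication (c) gives $r_v\le(q_{w_1}+\pi)(q_{w_2}+\pi)$ by independence of the two events $\{w_i\text{ not }\delta\text{-good, or edge }i\text{ flipped}\}$, while (a),(b) and a case analysis over the children's states yield $q_v\le 2\max_i r_{w_i}+O\!\big(\pi q+(q+r)^2+\pi^2\big)$. Starting from $q_\ell=r_\ell=0$ at the leaves, I would show by induction up $T_u$ that this $2$D map keeps $(q_v,r_v)$ inside a box of size $O(\delta^2)$: its linearization at the relevant fixed point has spectral radius $O(\sqrt\pi)<1$ for small $\delta$, so $q_v,r_v\le C\delta^2$ holds at \emph{every} node, independently of the depth of $T_u$. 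Then for the upper tail, $u$ fails to be $\delta^2$-good only if a child of $u$ is not $\delta$-good (probability $\le 2q=O(\delta^2)$) or one of the two edges at $u$ flips (probability $\le 2\pi=O(\delta)$), giving (i); and the lower tail (ii) is exactly the safety bound, since with $\epsilon_0=1-c_{\ref{eqn:antiReconstruction}}$ the event $\{W_u\le -c_{\ref{eqn:antiReconstruction}}\}$ is $\{u\text{ not safe}\}$, of probability $r_u\le C_{\ref{eqn:antiReconstruction}}\delta^2$.

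I expect the deterministic lemma to be the crux. The denominators $1+a_1a_2W_{w_1}W_{w_2}$ can be as small as $\Theta(\delta)$ precisely when the two children send strong opposite messages, so each implication requires showing that such near-cancellations either do not occur or in fact help (they drive $W_v$ toward $0$, hence toward safety), and all three must hold with explicit constants compatible with a single choice of $\epsilon_0$. The second delicate point is closing the coupled recursion: verifying that the $O(\delta^2)$ box is genuinely invariant despite the leading ``$2\max_i r_{w_i}$'' term in the bound for $q_v$, which is what ultimately reveals that the $O(\delta)$ loss in (i) comes \emph{only} from the two edges incident to $u$.
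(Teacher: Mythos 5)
Your high-level architecture is genuinely close in spirit to the paper's proof: the three tiers mirror the paper's trichotomy \eqref{eq:recons_tiers}, the identity $1-q(s,t)=\frac{(1-s)(1-t)}{1+st}$ is the engine behind Claims \ref{claim:quickBound1} and \ref{claim:quickBound2}, and your independence claims (disjoint subtrees, and unsigned magnetizations independent of flips) are exactly Claim \ref{lem:signed_mag}. However, two of your steps fail as stated, and the failure is structural. Your implication (b) is false: it imposes ``no flip'' only on the $\delta$-good child's edge, but a flip on the \emph{safe} child's edge is precisely the dangerous event. If $W_{w_2}\approx 1$ and $\eta_2=-1$, the input from that child is $a_2W_{w_2}\approx-\htheta_{w_2}\le -1+2c_{\ref{eqn:pHatBounds}}\delta$, so safety of $w_2$ buys nothing, and then $W_v= q\bigl(1-O(\delta),\,-1+2c_{\ref{eqn:pHatBounds}}\delta\bigr)$ is only bounded below by a negative \emph{constant} (Claim \ref{claim:strongopposites.}), not by $1-K_1\delta$. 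Consequently your claimed invariant $q_v=O(\delta^2)$ is false, and in fact contradicts the theorem itself: a single flip on an edge from $v$ to a child, an event of probability $\Theta(\delta)$, leaves $W_v$ a constant away from $1$, so $\PR_{\param^*}(v\text{ not }\delta\text{-good})=\Theta(\delta)$ (this is why the upper tail \eqref{eqn:Reconstruct} is tight, and it is visible as the spikes near $\pm 0.1$ in Figure \ref{fig:ZuHist}). The ``$O(\delta^2)$ box'' induction therefore cannot close; the best true invariant is $\PR(\text{not }\delta\text{-good})=O(\delta)$ together with $\PR(\text{not safe})=O(\delta^2)$.

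There is a second, deeper obstruction that survives even if you repair (b) by demanding no flip on either edge and weaken the invariant as above: your tier constants are circular and admit no fixed point. Implication (c), applied with a sibling that is only $\delta$-good with constant $K_1$, yields a safety threshold $\epsilon_0\approx \frac{2c_{\ref{eqn:pHatBounds}}}{2C_{\ref{eqn:pHatBounds}}+K_1}$, while the repaired (b) yields $\delta$-goodness with constant $\frac{2(2C_{\ref{eqn:pHatBounds}}+K_1)}{\epsilon_0}$; closing the recursion requires
\begin{equation*}
\frac{2\bigl(2C_{\ref{eqn:pHatBounds}}+K_1\bigr)^2}{2c_{\ref{eqn:pHatBounds}}}\le K_1,
\qquad\text{i.e.}\qquad \bigl(2C_{\ref{eqn:pHatBounds}}+K_1\bigr)^2\le c_{\ref{eqn:pHatBounds}}K_1,
\end{equation*}
which is impossible since the left side already exceeds $4C_{\ref{eqn:pHatBounds}}K_1> c_{\ref{eqn:pHatBounds}}K_1$. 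The way out --- and this is exactly what the paper's three-level block (Claims \ref{claim:quickBound2} and \ref{lem:Case1}) accomplishes --- is that the ``good'' sibling in both (b) and (c) must be required to be $\delta^2$-good, not merely $\delta$-good: the $\delta^2$-good constant is the unique self-consistent one, because a $K\delta^2$ deficit is absorbed into the $\Theta(\delta)$ channel deficit via \eqref{eq:hat_theta_mag_lower_bd}, so the intermediate ``safe'' and ``$\delta$-good'' constants become fixed functions of $c_{\ref{eqn:pHatBounds}},C_{\ref{eqn:pHatBounds}}$ alone (the ladder $-1+2c_{\ref{eqn:pHatBounds}}\delta \to -1+\tfrac{2c_{\ref{eqn:pHatBounds}}}{3C_{\ref{eqn:pHatBounds}}} \to 1-\bigl(\tfrac{9C_{\ref{eqn:pHatBounds}}^2}{c_{\ref{eqn:pHatBounds}}}+2C_{\ref{eqn:pHatBounds}}\bigr)\delta \to 1-C_{\ref{eqn:Reconstruct}}\delta^2$ inside Claim \ref{claim:quickBound2}). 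In the paper this is packaged as a single corruption washing out over \emph{three} levels of recursion, with all off-path nodes $\delta^2$-good; with your one-level formulation you would need the same upgrade, i.e.\ track $\PR(\text{not }\delta^2\text{-good})=O(\delta)$, $\PR(\text{not }\delta\text{-good})=O(\delta)$, $\PR(\text{not safe})=O(\delta^2)$, with (b) and (c) requiring $\delta^2$-good siblings and no flips. After those repairs your argument goes through and is essentially the paper's, reorganized as a one-level recursion with a three-component invariant.
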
 

	Following Theorem \ref{thm:Robust}, we introduce a trichotomy for the unsigned magnetizations. We say we have a ``good reconstruction'' at node $u$ if $\sigma_{u}Z_{u} \ge 1-C_{\ref{eqn:Reconstruct}} \delta^2$, ``severe failure'' if $\sigma_{u}Z_{u}\le  -c_{\ref{eqn:antiReconstruction}}$, and ``moderate failure'' otherwise. Then Theorem \ref{thm:Robust} states the following probability bounds for each of the three tiers of magnetization. See Figure \ref{fig:ZuHist} for a numerical experiment illustrating Theorem \ref{thm:Robust}.  
	\begin{align}\label{eq:recons_tiers}
		\begin{cases}
			\textup{Good reconstruction at $u$} \quad \Longleftrightarrow \quad 	\sigma_{u}Z_{u} \ge 1-C_{\ref{eqn:Reconstruct}} \delta^2 &\textup{(with prob. $\ge 1- c_{\ref{eqn:Reconstruct}}\delta$)} \\
			\textup{Moderate failure at $u$}  \quad \Longleftrightarrow \quad 	\sigma_{u}Z_{u}\in ( -c_{\ref{eqn:antiReconstruction}}, 1-C_{\ref{eqn:Reconstruct}}\delta^2) &\textup{(with prob. $\le c_{\ref{eqn:Reconstruct}}\delta$)} \\
			\textup{Severe failure at $u$}  \quad \Longleftrightarrow \quad 	\sigma_{u}Z_{u}\le  -c_{\ref{eqn:antiReconstruction}} 
			&\textup{(with prob. $\le C_{\ref{eqn:antiReconstruction}}\delta^{2}$)}
		\end{cases}
	\end{align}

%\HL{Maybe add a remark on using relaxed version of \ref{assumption1} at the cost of larger value of $C_{\ref{eqn:Reconstruct}}$ and $c_{\ref{eqn:Reconstruct}}$}

\begin{figure}[!ht]
	\centering
	\includegraphics[width=
	\textwidth]{./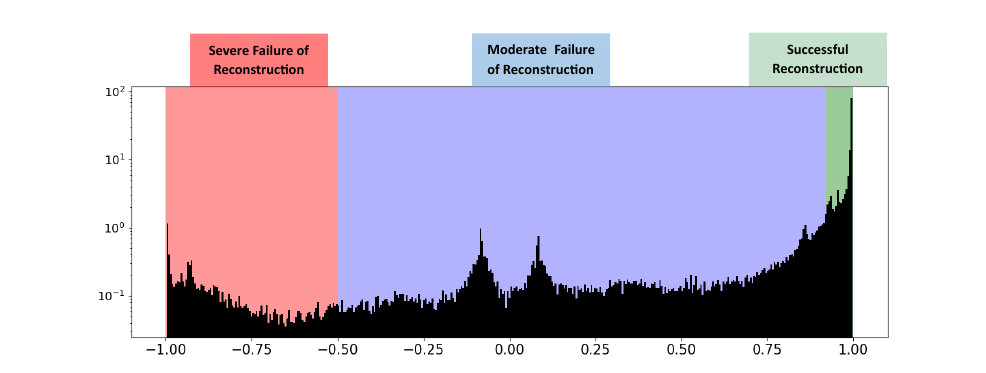}
	\caption{Histogram the 100,000 samples of unsigned magnetization $\sigma_uZ_u$ at the root of a descendant subtree $T_u$ with $n = 1000$ leaves. The horizontal axis is the unsigned magnetization, and the vertical axis is the normalized frequency plotted on a logarithmic scale. The parameters $\theta^*_e, \htheta_e \overset{i.i.d.}{\sim}\textup{Unif}([0.9,0.95])$. The two spikes around $\sigma_u Z_u \approx \pm 0.1$ correspond roughly to a flip on the edge from $u$ to one of its two children. The spike close to $\sigma_uZ_u\approx -1$ corresponds roughly to a flip on \textit{both} edges from $u$ to its children. %The tree is pictured in Figure \ref{fig:SignalReconstruct}. 
		%Even though the assumptions of reconstruction and anti-reconstruction do \textit{not} hold, t
		The red region represents \textit{severe failures} of reconstruction, the {blue}
		region represents \textit{moderate failures} of reconstruction, and the green region correspond to \textit{(successful) reconstruction}. 
		%which has probability $1-\Theta(\delta)$. When reconstruction and anti-reconstruction hold, the green region would have width $\Theta(\delta^2).$ 
		%\HL{Why does the histogram miss covering 1?} \DC{If we included the region near 1, then we wouldn't see any of the other bars in the histogram. It would look close to $\delta_1(dx)$.}
		%\HL{ Partition the plot into regions $[-1, 0.5]$ (anti-reconstruction), $[-0.5, -0.75]$ (moderate failure or corruption), and $[0.75, 1]$ (reconstruction), and indicate probabilities from our (anti-)reconstruction lemmas} 
	}
	\label{fig:ZuHist}
\end{figure}

\begin{rmk1}\label{rmk:constants}
The constants 
$c_{\ref{eqn:Reconstruct}},C_{\ref{eqn:Reconstruct}}, c_{\ref{eqn:antiReconstruction}}, C_{\ref{eqn:antiReconstruction}}$ in Theorem \ref{thm:Robust} are given explicitly by  $c_{\ref{eqn:Reconstruct}} = 7C_{\ref{eqn:pBounds}}$,   $C_{\ref{eqn:Reconstruct}}= \frac{4}{5}\big(\frac{9 C_{\ref{eqn:pHatBounds}}^2}{c_{\ref{eqn:pHatBounds}}} + 2 C_{\ref{eqn:pHatBounds}}\big)^2$, $c_{\ref{eqn:antiReconstruction}}  = 1 - \frac{2 c_{\ref{eqn:pHatBounds}}}{3C_{\ref{eqn:pHatBounds}}}$, and $C_{\ref{eqn:antiReconstruction}} =  78 C_{\ref{eqn:pBounds}}^2$. By chasing down the proof of Theorem \ref{thm:Robust}, one can check that 
\begin{align*}
    \delta_{\ref{eqn:antiReconstruction}} = \frac{1}{2380 C_{\ref{eqn:pBounds}}} \wedge \frac{C_{\ref{eqn:pHatBounds}}}{2C_{\ref{eqn:Reconstruct}}}\wedge \frac{5}{72C_{\ref{eqn:pHatBounds}}} \wedge c_{\ref{eqn:pHatBounds}}.
\end{align*} 
% \HL{Any set of explicit choices of the constants Cs that lead to an explicit threshold for $\delta$?}
In particular, for the choices of $C_{\ref{eqn:pBounds}} =C_{\ref{eqn:pHatBounds}} =1/2$ and $c_{\ref{eqn:pBounds}} =c_{\ref{eqn:pHatBounds}} =1/4$, the conclusions of Theorem \ref{thm:Robust} hold with
    \begin{align*}
        C_{\ref{eqn:Reconstruct}} = 80, \qquad &c_{\ref{eqn:Reconstruct}} = \frac72, \qquad  c_{\ref{eqn:antiReconstruction}} = \frac23, \qquad C_{\ref{eqn:antiReconstruction}} = \frac{39}{2}, \qquad \delta_{\ref{eqn:antiReconstruction}} = \frac{1}{1190}.
    \end{align*}
\end{rmk1}

\subsection{Application: the gradient of the population log-likelihood on a known phylogeny}
     
    To demonstrate the utility of Theorem \ref{thm:Robust}, we consider the branch length estimation problem using the maximum likelihood framework.
    Fix an unrooted binary tree $T=(V,E)$---the known phylogeny.

    The log-likelihood of $m$ samples of the leaf observations $\sigma^{(1)},\dots,\sigma^{(m)}$ under the model with parameter $\hparam$ over $T$ is given by 
	\begin{align}\label{eq:def_log_likelihood}
	\ell(\hparam):=\ell(\hparam; \sigma^{(1)},\dotsm, \sigma^{(m)}) := \frac{1}{m} \sum_{j=1}^m  \log \PR_{\hparam}(\hat\sigma_v = \sigma^{(j)}_v , \,\,\forall v\in L),
	\end{align} 
 where $\hat{\sigma}$ is a random spin configuration on $T$  sampled from $\PR_{\hparam}$. 
	We then seek to find the maximum likelihood estimator (MLE) of the true parameter $\param^{*}$ as 
	\begin{align}\label{eq:MLE_def}
		\hparam_{\textup{MLE}} \in \argmax_{\hparam\in [-1,1]^{E}}  \,\,  \ell(\hparam; \sigma^{(1)},\dotsm, \sigma^{(m)}),
	\end{align}
	where now the leaf observations $\sigma^{(1)},\dots,\sigma^{(m)}$ are i.i.d.~samples from $\PR_{\param^{*}}$ (on the known phylogeny $T$).

  While the log-likelihood function $\ell$ in \eqref{eq:def_log_likelihood} is in general non-concave, it has the nice property of 
  %being strictly concave 
  having at most one stationary point
  when restricted to a single edge  $e\in E$ (unless it is constant) \cite{fukami_maximum_1989,dinh_matsen_2017}. Thus, it is natural to cycle through the branch lengths and optimize one of them at a time \cite{guindon2003simple}. %, maximizing the one-dimensional restricted likelihood function. 
  This yields the following ``cyclic coordinate maximization'' algorithm for computing the MLE branch lengths $\param^{*}$: given the current estimate $\hparam_{k}=(\htheta_{k;e};e\in E)$ after $k$ iterations, compute the new estimate $\hparam_{k+1}$ by optimizing for one edge parameter $\theta_{k;e}$ at a time by %(approximately) 
 solving 
 \begin{align}\label{eq:alg_high_level}
 	\htheta_{k+1;e} = \argmax_{\htheta\in [-1,1]} \left[ \overline{f}_{k;e}(\htheta):=  \frac{1}{m} \sum_{i=1}^{m} \ell( \htheta_{k+1;1},\dots,\htheta_{k+1;e-1},\htheta,\htheta_{k;e+1},\dots,\htheta_{k;|E|} ;\sigma^{(i)}),  \right]
 \end{align}
 assuming that we label the edge set $E$ as integers from 1 through $|E|$. This can be done by solving for  
 %The one-dimensional objective function on the right-hand side of \eqref{eq:alg_high_level} has %a unique maximizer at 
 %a unique critical point in $(-1,1)$
\begin{align}\label{eq:fixed_point_eq}
 	\frac{\partial}{\partial\htheta_e} \overline{f}_{k;e}(\htheta) = 0,
 \end{align}
if such a point exists
% Since $\overline{f}_{k;e}(\htheta)$ is strictly convex, the unique zero of the above critical-point equation 
%which can be found %rapidly 
%by using standard zero-finding algorithms 
(using, e.g., \cite{brent2013algorithms}). %This is what was originally proposed by Guindon and Gascuel \cite{guindon2003simple} for branch length optimization. 

To analyze such an algorithm, one needs to consider the partial derivatives of the log-likelihood function in \eqref{eq:def_log_likelihood}. %, is a natural object to look at when analyzing the coordinate maximization algorithm. In fact, t
%which the following lemma shows that the one-dimensional partial derivatives of the log-likelihood is a 
%are simple rational functions of the magnetizations. %the gradient of the log-likelihood and the magnetizations. 
\begin{lemma}[Likelihood and magnetization]\label{lem:derivative}
	Let $\sigma$ be a single sample from the true model $\PR_{\param^{*}}$ over $T$ and let $e = \{x,y\}$ be an edge in the tree $T$. We let $Z_x := Z_x^{\hparam, T_x} := Z_x^{\hparam, T_x}(\sigma_{L_x})$ (resp.~$Z_y := Z_y^{\hparam, T_y} := Z_y^{\hparam, T_y}(\sigma_{L_y})$) be the magnetization at the root $x$ (resp.~$y$) of the descendant subtree $T_x$ (resp.~$T_y$) with respect to $y$ (resp.~$x$).
Then we have 
		\begin{align}
			\frac{\partial}{\partial\htheta_e}\ell(\hparam;\sigma) &= \frac{Z_x Z_y}{1+Z_x Z_y\htheta_e}.
   \label{eqn:derivative}
		\end{align}
\end{lemma}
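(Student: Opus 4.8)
The plan is to begin from the single-sample form of \eqref{eq:def_log_likelihood}, namely $\ell(\hparam;\sigma) = \log \PR_{\hparam}(\sigma_L)$ where $\PR_{\hparam}(\sigma_L) := \PR_{\hparam}(\hat\sigma_v = \sigma_v,\ \forall v\in L)$ is the marginal likelihood of the observed leaf configuration, and write
\[
\frac{\partial}{\partial\htheta_e}\ell(\hparam;\sigma) = \frac{1}{\PR_{\hparam}(\sigma_L)}\,\frac{\partial}{\partial\htheta_e}\PR_{\hparam}(\sigma_L).
\]
The whole task then reduces to obtaining a sufficiently explicit expression for $\PR_{\hparam}(\sigma_L)$ and differentiating it in $\htheta_e$.

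The key step is to factor this marginal across the edge $e=\{x,y\}$. Because the CFN channels are symmetric, the model is reversible with uniform single-site marginals, so we may treat $x$ as the root without changing any probability; then $\hat\sigma_x$ is uniform. Removing $e$ splits $T$ into the descendant subtrees $T_x$ and $T_y$ with $L = L_x \sqcup L_y$, and conditionally on $\hat\sigma_x,\hat\sigma_y$ the leaf observations on the two sides are independent. Writing the half-tree likelihoods $a_x(s) := \PR_{\hparam}(\hat\sigma_{L_x} = \sigma_{L_x}\mid \hat\sigma_x = s)$ and $a_y(s) := \PR_{\hparam}(\hat\sigma_{L_y} = \sigma_{L_y}\mid \hat\sigma_y = s)$ for $s\in\{\pm1\}$, and using the edge transition $\PR_{\hparam}(\hat\sigma_y = s_y\mid \hat\sigma_x = s_x) = (1+\htheta_e\, s_x s_y)/2$, I would obtain
\[
\PR_{\hparam}(\sigma_L) = \frac14\sum_{s_x,s_y\in\{\pm1\}} a_x(s_x)\,a_y(s_y)\,(1+\htheta_e\, s_x s_y).
\]

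Next I would rewrite the half-tree likelihoods in terms of the magnetizations. By Bayes' rule under the uniform prior on $\hat\sigma_x$, the posterior defining $Z_x$ in \eqref{eqn:def_magnetization} gives $Z_x = (a_x(+1)-a_x(-1))/(a_x(+1)+a_x(-1))$, and likewise for $Z_y$. Expanding the double sum and substituting $a_x(+1)-a_x(-1) = Z_x\,(a_x(+1)+a_x(-1))$ (and the analogue for $y$) collapses the cross term into the clean product form
\[
\PR_{\hparam}(\sigma_L) = \frac{\big(a_x(+1)+a_x(-1)\big)\big(a_y(+1)+a_y(-1)\big)}{4}\,\big(1+\htheta_e\, Z_x Z_y\big).
\]

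The crucial observation---the one point that must be argued rather than merely computed---is that the prefactor and the magnetizations $Z_x, Z_y$ depend only on the edge parameters strictly inside $T_x$ and $T_y$ together with the leaf data, but \emph{not} on $\htheta_e$, since $e$ belongs to neither subtree. Hence only the last factor is a function of $\htheta_e$, and differentiating $\log\PR_{\hparam}(\sigma_L)$ yields $\partial_{\htheta_e}\ell = Z_xZ_y/(1+\htheta_e\, Z_x Z_y)$, which is \eqref{eqn:derivative}. I expect the main (mild) obstacle to be the bookkeeping of the conditional-independence factorization across $e$ and the justification, via reversibility, that $x$ may be treated as a root carrying a uniform prior; once the product form above is in hand the differentiation is a one-liner.
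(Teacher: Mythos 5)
Your proposal is correct and follows essentially the same route as the paper's proof: both factor $\PR_{\hparam}(\sigma_L)$ across the edge $e$ using the Markov property and the uniform marginals, convert the half-tree conditional likelihoods (your $a_x(s)$, $a_y(s)$ are exactly the quantities $\PR_\hparam(\sigma_{L_x}=\tau_{L_x}\mid\sigma_x=s)$ appearing in the paper) into magnetizations via Bayes' rule, arrive at $\PR_{\hparam}(\sigma_L)=\PR_\hparam(\sigma_{L_x})\PR_\hparam(\sigma_{L_y})\bigl(1+\htheta_e Z_xZ_y\bigr)$, and differentiate the logarithm after noting that the prefactor and $Z_x,Z_y$ do not involve $\htheta_e$. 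The only cosmetic difference is bookkeeping: the paper carries posterior probabilities through the sum, while you isolate the half-tree likelihoods and collapse the cross term directly, which is the same algebra.
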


Therefore, the function on the left-hand side of the fixed point equation \eqref{eq:fixed_point_eq} is the empirical mean of random rational functions in the edge parameter $\htheta_{e}$, where the coefficients are given by the magnetizations at the end vertices of the edge $e$, computed using the current estimated parameter $\hparam$ with independent leaf configurations $\sigma^{(i)}$. In Theorem \ref{thm:gradient} below, we establish an approximate expression for the partial derivatives of the empirical log-likelihood function, in the population limit $m\rightarrow\infty$, %vanishes at the true edge parameter  $\theta^{*}$ 
up to an error of order $O(\delta)$. 
%In Theorem \ref{thm:gradient} below, we show that in the %reconstruction regime \ref{assumption1}, the expected derivative of the log-likelihood function $\ell$ in \eqref{eq:def_log_likelihood} is within $O(\delta)$ from an explicit function of $\param^{*}$ and $\hparam$.

	\begin{theorem}[Population log-likelihood landscape: gradient]\label{thm:gradient}
		There exists $C_{\ref{eqn:gradient}} = C_{\ref{eqn:gradient}}(c_{\ref{eqn:pBounds}},C_{\ref{eqn:pBounds}}, c_{\ref{eqn:pHatBounds}},C_{\ref{eqn:pHatBounds}})>0$ and $\delta_{\ref{eqn:gradient}}=\delta_{\ref{eqn:gradient}}(c_{\ref{eqn:pBounds}},C_{\ref{eqn:pBounds}}, c_{\ref{eqn:pHatBounds}},C_{\ref{eqn:pHatBounds}})$ such that the following holds for any binary tree $T$ and any $\delta<\delta_{\ref{eqn:gradient}}$. If \ref{assumption1} holds for that $\delta$ then for every $e\in E(T)$,
		\begin{equation}\label{eqn:gradient}
			\left|\frac{\partial}{\partial \htheta_e} \E_{\param^{*}} \left[ \ell(\hparam;\sigma) \right]-\frac{\theta^*_e-\htheta_e}{1-\htheta_e^{2} } \right| \le C_{\ref{eqn:gradient}}\delta.
		\end{equation}
	\end{theorem}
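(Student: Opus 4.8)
By Lemma~\ref{lem:derivative}, for a single sample $\sigma \sim \PR_{\param^*}$ we have
$\frac{\partial}{\partial\htheta_e}\ell(\hparam;\sigma) = \frac{Z_x Z_y}{1+\htheta_e Z_x Z_y}$,
where $Z_x,Z_y$ are the magnetizations at the two endpoints of $e=\{x,y\}$, each computed on its descendant subtree using $\hparam$.
Taking expectations and justifying the interchange of $\E_{\param^*}$ with $\partial/\partial\htheta_e$ (the integrand is uniformly bounded since $|\htheta_e Z_xZ_y|<1$ is bounded away from $1$ under \ref{assumption1}), the plan is to show
\[
    \E_{\param^*}\!\left[\frac{Z_xZ_y}{1+\htheta_e Z_xZ_y}\right]
    \approx \frac{\theta^*_e - \htheta_e}{1-\htheta_e^2}
    \quad\text{up to } O(\delta).
\]

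**Main idea: replace $Z_x,Z_y$ by the true spins.**
Theorem~\ref{thm:Robust} is exactly the tool that says $Z_x\approx \sigma_x$ and $Z_y\approx\sigma_y$ with overwhelming probability. First I would compute the target expression assuming the magnetizations were \emph{exactly} the spins. Writing $W := \sigma_x\sigma_y \in \{\pm1\}$, observe that under $\PR_{\param^*}$ the edge $e$ flips the spin with probability $p_e^*=(1-\theta^*_e)/2$, so $\PR_{\param^*}(W=+1)=\tfrac{1+\theta^*_e}{2}$ and $\PR_{\param^*}(W=-1)=\tfrac{1-\theta^*_e}{2}$ (this uses that $\sigma_x,\sigma_y$ are the states at the two endpoints and the tree is broadcast from a uniform root, so $\sigma_x\sigma_y$ has the single-edge law). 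Then
\[
    \E_{\param^*}\!\left[\frac{W}{1+\htheta_e W}\right]
    = \frac{1+\theta^*_e}{2}\cdot\frac{1}{1+\htheta_e}
    + \frac{1-\theta^*_e}{2}\cdot\frac{-1}{1-\htheta_e}
    = \frac{\theta^*_e-\htheta_e}{1-\htheta_e^2},
\]
which is precisely the claimed leading term. So the whole theorem reduces to bounding the error from swapping $(Z_x,Z_y)$ for $(\sigma_x,\sigma_y)$.

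**Controlling the substitution error.**
The function $g(a,b)=\frac{ab}{1+\htheta_e ab}$ is Lipschitz in $(a,b)$ on the compact set $|ab|\le 1$ with $\htheta_e$ bounded away from $\pm1$; its denominator stays bounded below by a constant depending only on the constants in \ref{assumption1}. I would split the expectation of $g(Z_x,Z_y)-g(\sigma_x,\sigma_y)$ according to the good/moderate/severe trichotomy of \eqref{eq:recons_tiers} applied to \emph{both} $x$ and $y$. On the good event (probability $\ge 1-O(\delta)$ for each endpoint by Theorem~\ref{thm:Robust}(i)), $\sigma_x Z_x\ge 1-C_{\ref{eqn:Reconstruct}}\delta^2$ forces $|Z_x-\sigma_x|=O(\delta^2)$, so the integrand difference is $O(\delta^2)$ there; integrated, this contributes $O(\delta^2)$. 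On the complementary (moderate-or-worse) event, which has probability $O(\delta)$ by the union bound over the two endpoints, the integrand difference is bounded by an absolute constant, contributing $O(\delta)$. Summing gives the advertised $O(\delta)$ bound, with $C_{\ref{eqn:gradient}}$ traceable to the Lipschitz constant of $g$, the lower bound on the denominator, and the constants $c_{\ref{eqn:Reconstruct}},C_{\ref{eqn:Reconstruct}}$ from Theorem~\ref{thm:Robust}.

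**Anticipated obstacle.**
The delicate point is that $Z_x$ and $Z_y$ are \emph{correlated}: they share the spin-transmission through edge $e$ and are both driven by the same realization of $\sigma$. One must check that Theorem~\ref{thm:Robust} is applicable to each endpoint \emph{marginally} (it is, since it is stated for an arbitrary descendant subtree and makes no independence assumption) and that the event splitting above only ever uses \emph{union bounds} and \emph{marginal} tail estimates, never independence of $Z_x$ and $Z_y$. A second subtlety is that the leading-term identity uses the \emph{marginal} law of $W=\sigma_x\sigma_y$ under $\PR_{\param^*}$, which is the single-edge two-point law regardless of the rest of the tree—this is a clean fact but should be stated carefully so the computation is not accidentally conditioned on the descendant subtrees. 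I expect the Lipschitz/denominator bookkeeping to be routine once the trichotomy split and the marginal two-point law are in place, so the real work is organizing the error decomposition to avoid any hidden independence assumption.
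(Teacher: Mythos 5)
Your reduction via Lemma~\ref{lem:derivative} and the leading-term computation for $W=\sigma_x\sigma_y$ (giving $\frac{\theta^*_e-\htheta_e}{1-\htheta_e^2}$) are correct and identical to the paper's. However, the error-control step contains a genuine gap, and it is exactly the point the paper flags as ``the main difficulty.'' Your claim that the denominator of $g(a,b)=\frac{ab}{1+\htheta_e ab}$ ``stays bounded below by a constant depending only on the constants in \ref{assumption1}'' is false: under \ref{assumption1} we have $\htheta_e \ge 1-2C_{\ref{eqn:pHatBounds}}\delta$, so when $Z_xZ_y\approx -1$ the denominator $1+\htheta_e Z_xZ_y$ can be as small as $2c_{\ref{eqn:pHatBounds}}\delta$, and the integrand can be of order $\delta^{-1}$ (this is precisely \eqref{eqn:grad1} in the paper). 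This breaks your decomposition in two places. First, on the good event \emph{with a flip on $e$}, both $g(Z_x,Z_y)$ and $g(\sigma_x,\sigma_y)$ are of order $\delta^{-1}$ and their difference is $O(1)$, not $O(\delta^2)$; this case still contributes $O(\delta)$ after multiplying by $\PR_{\param^*}(F)=O(\delta)$, but only via the more careful cancellation the paper carries out. Second, and fatally, on your ``moderate-or-worse'' event of probability $O(\delta)$ the integrand difference is $O(\delta^{-1})$, not $O(1)$, so your bound for that term is $O(\delta)\cdot O(\delta^{-1})=O(1)$ --- not $O(\delta)$.

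To close the gap one must split the bad event more finely, as the paper does. On the event of \emph{no flip and exactly one moderate failure}, the product $Z_xZ_y$ is bounded away from $-1$ (the good endpoint forces $\sigma Z\approx 1$ and the moderate one satisfies $\sigma Z \ge -c_{\ref{eqn:antiReconstruction}}$), so the integrand is $O(1)$ there and probability $O(\delta)$ suffices. Every remaining bad configuration --- a severe failure at either endpoint, a reconstruction failure concurrent with a flip on $e$, or two moderate failures --- must be shown to have probability $O(\delta^2)$, so that the worst-case $O(\delta^{-1})$ integrand is absorbed. This requires two ingredients absent from your proposal: the lower-tail bound of Theorem~\ref{thm:Robust}~\textbf{(ii)} (which you never invoke), and the \emph{independence} of the unsigned magnetizations $\sigma_xZ_x$, $\sigma_yZ_y$ and the flip indicator (Claim~\ref{lem:signed_mag}), used e.g.\ to bound $\PR_{\param^*}(\text{failure and flip})\le O(\delta)\cdot O(\delta)$ and $\PR_{\param^*}(\text{two moderate failures})\le O(\delta^2)$. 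Your closing remark that the argument should ``never use independence of $Z_x$ and $Z_y$'' has it backwards: marginal tail bounds plus union bounds only give probability $O(\delta)$ for these events, which is not enough; the factorization supplied by Claim~\ref{lem:signed_mag} is what makes the proof work.
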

The proof of the result above is provided in Section \ref{sec:keyLemmas}, where our main ancestral reconstruction result (Thm. \ref{thm:Robust}) is used crucially. %Before we present the full proof, below w
We give a brief sketch of the main idea.
By Theorem \ref{thm:Robust}, we have $Z_{x}Z_{y}\approx \sigma_{x}\sigma_{y}$ with probability close to $1$, so essentially we have 
  %Hence, by taking an expectation under the true parameter $\param^{*}$, %and by taking expectations
	\begin{align}
		\nonumber\frac{\partial}{\partial \htheta_e} \E_{\param^{*}} \left[ \ell(\hparam;\sigma) \right] &= \E_{\param^{*}}\left[\frac{\partial}{\partial\htheta_e}\ell(\hparam;\sigma)\right]\\ &\approx \nonumber\E_{\param^{*}}\left[\frac{\sigma_x\sigma_y}{1+\sigma_x\sigma_y \htheta_e}\right]\\ &= \nonumber \frac{1}{1+\htheta_e}\PR_{\param^{*}}(\sigma_x=\sigma_y) - \frac{1}{1-\htheta_e}\PR_{\param^{*}}(\sigma_x\neq\sigma_y)\\
			 &= \frac{1}{2}\left(\frac{1+\theta^*_e}{1+\htheta_e}-\frac{1-\theta^*_e}{1-\htheta_e} \right)\\ %= \frac{(1+\theta^*_e)(1-\htheta_e)-(1-\theta^*_e)(1+\htheta_e)}{2(1+\htheta_e)(1-\htheta_e)}\\
   &= \frac{\theta^*_e-\htheta_e}{1-\htheta_e^2},\label{eqn:grad00}
	\end{align} 
where $\sigma$ is a single sample from the true model $\PR_{\param^{*}}$ over $T$.
The main difficulty is to quantify the error of the approximation in the second step in the above display. The issue is that the (random) gradient of the log-likelihood $\frac{Z_xZ_y}{1+Z_xZ_y\htheta_e}$ in \eqref{eqn:derivative} can blow up (i.e., is of order $\delta^{-1}$) with some small probability 
	%is bounded \snote{Will the reader understand why this is relevant? And what does ``bounded'' mean here?} so long as $Z_xZ_y \not\approx -1$. But reconstruction and anti-reconstruction imply that there are two ways that $Z_xZ_y\approx -1$: 
	when $Z_xZ_y\approx -1$. In order to control the expectation, we need to have tight control of the probabilities with which the denominator gets close to zero. To spell out further details, we note that the blow-up can happen in the following two ways: 
	(1) $\sigma_x Z_x, \sigma_y Z_y\approx 1$ but $\sigma_x\sigma_y = -1$; or
	(2) $\sigma_x\sigma_y = 1$, but either $\sigma_x Z_x\approx -1$, $\sigma_yZ_y \approx 1$ or $\sigma_x Z_x\approx 1, \sigma_yZ_y\approx -1$. 
	The former is already accounted for in the above heuristic. The latter is rare; it has probability $O(\delta^2)$ by the reconstruction theorem. Therefore the expected contribution from the event $\sigma_x\sigma_y = 1$ but $Z_xZ_y\approx -1$ ends up being $O(\delta)$.

%The high-level idea for the proof of Theorem \ref{thm:gradient} is to ``approximate'' the magnetizations $Z_x, Z_y$ by the actual states $\sigma_x,\sigma_y$ and bound the error resulting from this approximation by the Theorem \ref{thm:Robust}. 
 
	%can be \snote{Can be? Maybe we should show where it's coming from.} explicitly computed. %\HL{You mean LHS? By using recursive computation for the magnetization and taking the population limit?}

%Let us note that under Assumption \ref{assumption1}, $|\theta^*_e-\htheta_e| = O(\delta)$, $1-\htheta_e = \Theta(\delta)$ and so
%\begin{equation*}
%    \frac{\theta_e^* - \htheta_e}{(1+\htheta_e)(1-\htheta_e)} = \frac{O(\delta)}{\Theta(\delta)} = O(1).
%\end{equation*}
%Moreover, if $|\theta_e^*-\htheta_e| = \Theta(\delta)$, then $\frac{\theta_e^* - \htheta_e}{(1+\htheta_e)(1-\htheta_e)} = \Theta(1)$ which is strictly larger order the the error in \eqref{eqn:gradient}.
    
Theorem \ref{thm:gradient} gives some insights into why coordinate maximization may work for branch length estimation. For large enough sample size $m$, the partial derivative of the empirical log-likelihood with respect to each branch length parameter $\htheta_{e}$ should be close to its population expectation, which almost vanishes uniquely at the true branch length $\theta_{e}^{*}$. More precisely, Theorem \ref{thm:gradient} shows that the derivative in the population limit will vanish at a branch length $\htheta_{e}$ for which $\frac{\theta^*_e-\htheta_e}{1-\htheta_e^{2} }$ is of order $O(\delta)$. Note that the denominator is of order $O(\delta)$, so this requires the numerator to be at most of order $O(\delta^{2})$. Thus, solving the fixed point equation \eqref{eq:fixed_point_eq} in the population limit \textit{only once} yields a $O(\delta^{2})$-accurate branch length estimate. This observation is stated in Corollary \ref{cor:coord_ini} below. 

	\begin{corollary}[Initialization by coordinate maximization]\label{cor:coord_ini}
		Suppose the true parameter $\param^{*}$ and the initial estimate $\hat{\param}_{0}$ satisfies \ref{assumption1}. Let $\hat{\param}_{1}$ be obtained by a single round of coordinate maximization. Then there exists a constant $C_{\ref{eqn:coord_ini_guarantee}}>0$ such that,  in the population limit $m\rightarrow\infty$, 
		\begin{align}\label{eqn:coord_ini_guarantee}
			\lVert \param^{*} - \hparam_{1} \rVert_{\infty} \le C_{\ref{eqn:coord_ini_guarantee}} \delta^{2}. 
		\end{align}
		%\HL{David: Can you express the constant $C_{17}$ in terms of the more basic constants in \ref{assumption1}?}
	\end{corollary}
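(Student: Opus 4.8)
The plan is to analyze a single round of coordinate maximization directly on the population objective $L(\hparam) := \E_{\param^{*}}[\ell(\hparam;\sigma)]$, which is the $m\to\infty$ limit of the finite-sample update \eqref{eq:alg_high_level}. The first observation I would make is that each coordinate subproblem is well behaved. By Lemma~\ref{lem:derivative}, $\frac{\partial}{\partial\htheta_e}\ell(\hparam;\sigma) = \frac{Z_xZ_y}{1+Z_xZ_y\htheta_e}$, and crucially the magnetizations $Z_x,Z_y$ depend only on edge parameters inside the subtrees $T_x,T_y$, hence not on $\htheta_e$ itself. Thus, for fixed data and fixed remaining coordinates, $\htheta_e\mapsto \ell$ has second derivative $-Z_x^2Z_y^2/(1+Z_xZ_y\htheta_e)^2\le 0$, so it is concave; averaging preserves concavity, so $\htheta_e\mapsto L$ is concave over $(-1,1)$ as well. (Differentiation under the expectation is justified by dominated convergence: for $\hparam\in\hParam_0(\delta)$ the integrand is bounded by $(1-\htheta_e)^{-1}\le (2c_{\ref{eqn:pHatBounds}}\delta)^{-1}$ uniformly.) This recovers, and sharpens, the ``at most one stationary point'' property quoted before \eqref{eq:fixed_point_eq}, and reduces each update to locating the unique zero of the monotone map $\htheta_e\mapsto \frac{\partial}{\partial\htheta_e}L$.

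Next I would localize that zero inside the box $\hParam_0(\delta)$ so that Theorem~\ref{thm:gradient} applies to it. Write $g_e(\htheta):=\frac{\theta^*_e-\htheta}{1-\htheta^2}$ for the leading term in \eqref{eqn:gradient}. The strict ordering $c_{\ref{eqn:pHatBounds}}<c_{\ref{eqn:pBounds}}<C_{\ref{eqn:pBounds}}<C_{\ref{eqn:pHatBounds}}$ places $\theta^*_e$ strictly inside the interval $[1-2C_{\ref{eqn:pHatBounds}}\delta,\,1-2c_{\ref{eqn:pHatBounds}}\delta]$. Evaluating $g_e$ at the two endpoints and using $1-\htheta^2\le 4C_{\ref{eqn:pHatBounds}}\delta$ (resp.\ $\le 4c_{\ref{eqn:pHatBounds}}\delta$) shows $g_e(1-2C_{\ref{eqn:pHatBounds}}\delta)\ge \frac{C_{\ref{eqn:pHatBounds}}-C_{\ref{eqn:pBounds}}}{2C_{\ref{eqn:pHatBounds}}}>0$ while $g_e(1-2c_{\ref{eqn:pHatBounds}}\delta)\le \frac{c_{\ref{eqn:pHatBounds}}-c_{\ref{eqn:pBounds}}}{2c_{\ref{eqn:pHatBounds}}}<0$; both are $\Theta(1)$ constants independent of $\delta$. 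Since Theorem~\ref{thm:gradient} gives $|\frac{\partial}{\partial\htheta_e}L-g_e|\le C_{\ref{eqn:gradient}}\delta$ throughout $\hParam_0(\delta)$, for $\delta<\delta_{\ref{eqn:gradient}}$ small enough that $C_{\ref{eqn:gradient}}\delta$ is below both constant gaps, $\frac{\partial}{\partial\htheta_e}L$ is strictly positive at the left endpoint and strictly negative at the right one. By concavity over all of $(-1,1)$ (monotonicity of the derivative), its unique zero $\htheta_{1;e}$ is the global maximizer over $[-1,1]$ and lies in the interior of $\hParam_0(\delta)$.

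With the maximizer pinned inside the box, the estimate follows quickly. At $\htheta_{1;e}$ we have $\frac{\partial}{\partial\htheta_e}L=0$, so Theorem~\ref{thm:gradient} gives $\big|\frac{\theta^*_e-\htheta_{1;e}}{1-\htheta_{1;e}^2}\big|\le C_{\ref{eqn:gradient}}\delta$. Because $\htheta_{1;e}\in\hParam_0(\delta)$ we have $1-\htheta_{1;e}^2=(1-\htheta_{1;e})(1+\htheta_{1;e})\le 2C_{\ref{eqn:pHatBounds}}\delta\cdot 2=4C_{\ref{eqn:pHatBounds}}\delta$, whence $|\theta^*_e-\htheta_{1;e}|\le 4C_{\ref{eqn:pHatBounds}}C_{\ref{eqn:gradient}}\delta^2$. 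Since this holds for each coordinate and, by the previous paragraph, every updated coordinate stays in $\hParam_0(\delta)$ (so the hypothesis of Theorem~\ref{thm:gradient} is preserved as the cyclic sweep moves through already-updated coordinates), after one full round every entry of $\hparam_1$ satisfies this bound. Taking the maximum over $e$ yields \eqref{eqn:coord_ini_guarantee} with $C_{\ref{eqn:coord_ini_guarantee}}=4C_{\ref{eqn:pHatBounds}}C_{\ref{eqn:gradient}}$.

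The main obstacle I anticipate is exactly this localization step: Theorem~\ref{thm:gradient} controls the gradient only on $\hParam_0(\delta)$, so one must independently certify that each coordinate optimizer does not escape the box before invoking it, and must maintain this as an invariant of the cyclic update. The per-coordinate concavity coming from Lemma~\ref{lem:derivative} is what makes this clean, turning a potential global-landscape question into an endpoint sign check; the only quantitative care needed is choosing $\delta$ small relative to the $\Theta(1)$ endpoint gaps $\frac{C_{\ref{eqn:pHatBounds}}-C_{\ref{eqn:pBounds}}}{2C_{\ref{eqn:pHatBounds}}}$ and $\frac{c_{\ref{eqn:pBounds}}-c_{\ref{eqn:pHatBounds}}}{2c_{\ref{eqn:pHatBounds}}}$, divided by $C_{\ref{eqn:gradient}}$, together with justifying the differentiation-under-expectation used to define $\frac{\partial}{\partial\htheta_e}L$.
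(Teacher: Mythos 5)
Your proposal is correct, and it reaches the bound by a genuinely different mechanism than the paper. The paper's proof plugs the approximation of Theorem~\ref{thm:gradient} into the stationarity condition with a \emph{fixed} signed error $\eps\in[-C_{\ref{eqn:gradient}}\delta,C_{\ref{eqn:gradient}}\delta]$, solves the resulting quadratic $\eps\htheta_e^2+\htheta_e-(\theta_e+\eps)=0$ in closed form, and Taylor-expands the root to read off $|\htheta_e^*-\theta_e|\le C_{\ref{eqn:coord_ini_guarantee}}\delta^2$; it never discusses where the maximizer lives, nor that the error term actually varies with $\htheta_e$. You instead evaluate the gradient bound only \emph{at} the stationary point, getting $|\theta^*_e-\htheta_{1;e}|\le C_{\ref{eqn:gradient}}\delta\,(1-\htheta_{1;e}^2)\le 4C_{\ref{eqn:pHatBounds}}C_{\ref{eqn:gradient}}\delta^2$ directly, which sidesteps the fixed-$\eps$ issue entirely. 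What your route buys beyond that is rigor on two points the paper glosses over: (a) Theorem~\ref{thm:gradient} is only valid on $\hParam_0(\delta)$, so one must certify the coordinate maximizer does not escape the box before invoking it --- your per-coordinate concavity (second derivative $-\,(Z_xZ_y)^2/(1+Z_xZ_y\htheta_e)^2\le 0$ from Lemma~\ref{lem:derivative}) plus the endpoint sign check $g_e(1-2C_{\ref{eqn:pHatBounds}}\delta)\ge\frac{C_{\ref{eqn:pHatBounds}}-C_{\ref{eqn:pBounds}}}{2C_{\ref{eqn:pHatBounds}}}>0>\frac{c_{\ref{eqn:pHatBounds}}-c_{\ref{eqn:pBounds}}}{2c_{\ref{eqn:pHatBounds}}}\ge g_e(1-2c_{\ref{eqn:pHatBounds}}\delta)$ does exactly this, exploiting the strict nesting $c_{\ref{eqn:pHatBounds}}<c_{\ref{eqn:pBounds}}<C_{\ref{eqn:pBounds}}<C_{\ref{eqn:pHatBounds}}$ in Definition~\ref{def:parameter_spaces}; and (b) the invariant that already-updated coordinates remain in $\hParam_0(\delta)$ so that \ref{assumption1} stays valid through the cyclic sweep. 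What the paper's route buys is brevity and a fully explicit constant via the expansion of the quadratic root; both arguments ultimately rest on the same key input (Theorem~\ref{thm:gradient}) and the same observation that $1-\htheta_e^2=O(\delta)$ inside the box forces the numerator $\theta^*_e-\htheta_e$ to be $O(\delta^2)$ at any near-stationary point.
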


 %The first step in analyzing Algorithm \ref{algorithm:coord} is the following $O(\delta^{2})$-estimation guarantee after the initialization step.

    While Corollary \ref{cor:coord_ini} shows why coordinate maximization in the empirical likelihood landscape may succeed in obtaining an accurate estimation of the true parameter $\param^{*}$, the result itself does not hold much of a practical significance because it only holds in the population limit. 
    %If one already has access to the population likelihood, one can instead use a distance method (see, e.g., \cite{felsenstein1984distance, saitou1987neighbor}) for branch length estimation. It is known that if the distances used accurately reflect the true underlying evolutionary distances, then with an infinite number of observations ($m\rightarrow\infty$), the branch lengths estimated by the distance method will converge to the true branch lengths \cite{steel1992, saitou1987neighbor}. 
    In a companion paper, we establish that coordinate maximization is indeed guaranteed to return an accurate estimation of the true parameter with high probability in the finite sample regime, where the required sample size grows polynomially in the size of the tree (when sufficiently ``balanced''). This requires a substantial analysis of the Hessian of the log-likelihood function---an analysis where our main result (Thm.~\ref{thm:Robust}) plays a key role again. %, which is not within the scope of the present work. 

	\section{Proof of main result (Theorem \ref{thm:Robust})}\label{sec:MAgLemmaProofs}

	In this section, we prove Theorem \ref{thm:Robust}. We define $q(x,y) := \frac{x+y}{1+xy}$ so that \eqref{eqn:recursionBorg} now reads as 
\begin{equation}\label{eqn:recursionBorg2}
	Z_u = q(\hat{\theta}_{v}Z_v, \hat{\theta}_w Z_{w}).
\end{equation}
	We will use the following useful facts frequently:
		$(s,t)\mapsto q(s,t)$ is increasing in both $s,t\in(-1,1)$ as $\frac{\partial}{\partial s} q(s,t) = \frac{1-t^2}{(1+st)^2}>0$ for $s,t\in(-1,1)$, and similarly for $t$.

	\subsection{Conditional independence of magnetizations} \label{sec:MagLemmas}

	We will need an important property of the magnetization at distinct nodes: they are conditionally independent given the spins at intermediate nodes. More precisely, suppose we have two node-disjoint descendant subtrees $T_{u}$ and $T_{v}$. Then for any node $w$ along the shortest path between $u$ and $v$, 
		\begin{align}\label{eq:Z_conditional_independence}
			Z_{u} \Perp   Z_{v} \,|\, \sigma_{w}, 
		\end{align}
		which follows from the Markov property of the CFN model and the fact that $Z_{u}$ is determined by $\sigma_{L_{u}}$. In fact, the ``unsigned magnetizations'' $\sigma_{u}	Z_{u}$ are independent as long as the supporting descendant subtrees are node-disjoint, as stated in Claim \ref{lem:signed_mag} below. We state this in a slightly more general formulation than what we need.
	
	\begin{claim}[Independence of unsigned magnetizations]\label{lem:signed_mag}
		The following hold:
		\begin{description}
			\item[(i)] Let $u$ be a node in $T$ with a descendant subtree $T_{u}$ and corresponding magnetization $Z_{u}$. Then the unsigned magnetization $\sigma_{u}Z_{u}$ is independent of $\sigma_{u}$ under $\PR_{\param^{*}}$. 

            \vspace{0.1cm}
			\item[(ii)] Let $v_{1},\dots,v_{k}$ be nodes in $T$ and suppose there are corresponding descendant subtrees $T_{v_{1}},\dots,T_{v_{k}}$ that are node-disjoint. Let $Z_{v_{i}}$ denote the corresponding magnetization at $v_{i}$ for $i=1,\dots, k$.
			Then the $\sigma_{v_{i}} Z_{v_{i}}$s, $i=1,\dots,k$, are independent under $\PR_{\param^{*}}$. 
		\end{description}
	\end{claim}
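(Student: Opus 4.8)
The plan is to derive both statements from two symmetries of the CFN model: first, that the magnetization is an \emph{odd} function of the leaf spins, and second, that under $\PR_{\param^{*}}$ the law of a descendant subtree is invariant, up to a global sign flip, under flipping the spin at its root. For a spin configuration $\tau$ on any set of nodes, write $-\tau$ for the configuration with every coordinate negated. I would first record the oddness of the magnetization: for any leaf configuration $\sigma_{L_u}$,
\begin{align}
Z_u^{\hparam, T_u}(-\sigma_{L_u}) = -\,Z_u^{\hparam, T_u}(\sigma_{L_u}).
\end{align}
This follows from the global flip symmetry of $\PR_{\hparam}$ --- since the root is uniform and every channel is symmetric, $\PR_{\hparam}(\hat\sigma)=\PR_{\hparam}(-\hat\sigma)$ --- applied inside the definition \eqref{eqn:def_magnetization}: negating the leaf observations interchanges the two posterior probabilities $\PR_{\hparam}(\hat\sigma_u=\pm1\mid \hat\sigma_{L_u}=\sigma_{L_u})$. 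The same flip symmetry, applied now under the true model $\PR_{\param^{*}}$ and localized to $T_u$ via the Markov property, gives that the conditional law of $\sigma_{L_u}$ given $\sigma_u=-1$ coincides with the conditional law of $-\sigma_{L_u}$ given $\sigma_u=+1$.

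For part (i), I would compute, for a bounded test function $\phi$, the conditional law of $\sigma_u Z_u$ given $\sigma_u$. Conditioning on $\sigma_u=-1$ and using the flip symmetry of the conditional law together with the oddness of $Z_u$ (the two sign changes cancel), one finds $\E_{\param^{*}}[\phi(\sigma_u Z_u)\mid \sigma_u=-1]=\E_{\param^{*}}[\phi(\sigma_u Z_u)\mid \sigma_u=+1]$. Hence the conditional law of $\sigma_u Z_u$ does not depend on the value of $\sigma_u$, which is precisely the claimed independence. The degenerate case where $u$ is itself a leaf is trivial, since then $\sigma_u Z_u\equiv 1$.

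For part (ii), I would condition on the vector of root spins $(\sigma_{v_1},\dots,\sigma_{v_k})$. Because the subtrees $T_{v_i}$ are node-disjoint, each $v_i$ is the unique cut vertex joining $T_{v_i}$ to the rest of $T$, so the set $\{v_1,\dots,v_k\}$ separates the interiors of the subtrees from one another; the global Markov property of the tree-indexed chain then gives that, conditionally on $(\sigma_{v_1},\dots,\sigma_{v_k})$, the leaf blocks $\sigma_{L_{v_1}},\dots,\sigma_{L_{v_k}}$ are mutually independent, with the conditional law of $\sigma_{L_{v_i}}$ depending only on $\sigma_{v_i}$. Since each $Z_{v_i}$ is a deterministic function of $\sigma_{L_{v_i}}$, the products $\sigma_{v_i}Z_{v_i}$ are conditionally independent given $(\sigma_{v_1},\dots,\sigma_{v_k})$. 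Finally, by part (i) applied within each subtree, the conditional law of $\sigma_{v_i}Z_{v_i}$ given $\sigma_{v_i}$ does not depend on the value of $\sigma_{v_i}$, so each conditional factor equals the unconditional law of $\sigma_{v_i}Z_{v_i}$; integrating out $(\sigma_{v_1},\dots,\sigma_{v_k})$ then yields the full factorization $\E_{\param^{*}}\big[\prod_i \phi_i(\sigma_{v_i}Z_{v_i})\big]=\prod_i \E_{\param^{*}}[\phi_i(\sigma_{v_i}Z_{v_i})]$.

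The two symmetries are immediate from the structure of the model and constitute the routine ingredients. The step requiring the most care is the separating-set argument in part (ii): one must verify that node-disjointness of descendant subtrees really makes the roots $\{v_i\}$ a separating set for the interiors, so that the Markov property delivers conditional independence of the leaf blocks. Once that conditional independence is in hand, the essential point --- and the reason the claim concerns the \emph{unsigned} magnetizations rather than the $Z_{v_i}$ themselves --- is that part (i) eliminates the only remaining dependence on the conditioning variables, thereby upgrading conditional independence to unconditional independence.
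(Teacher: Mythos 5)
Your proof is correct and follows essentially the same route as the paper: part (i) via the spin-flip symmetry of the conditional leaf law combined with the oddness of the magnetization (the paper's equations for sign flipping and the definition of $Z_u$), and part (ii) by conditioning on the root spins of the disjoint subtrees, invoking the Markov property for conditional independence, and then using part (i) to replace each conditional factor by the unconditional law before integrating out.
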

	
		\begin{proof}
		We will first show \textbf{(i)}. Denote $\zeta_{u}:=\sigma_{u}Z_{u}$.
		%Note that each $v_{u}$ is adjacent to a unique leaf, say $u_{u}$, of $T'$. Let $L':=\{u_{1},\dots,u_{k}\}$ denote the set of all leaves in $T'$. 
		It is enough to show that 
		\begin{align}\label{eq:Z_invariance_symmetry}
			\zeta_{u} \,|\, \{\sigma_{u}=1\} \,\, \overset{d}{=} \,\,	\zeta_{u}\,|\, \{\sigma_{u}=-1\} \,\, \overset{d}{=} \,\, \zeta_{u}.
		\end{align}
		To establish this, note that, for any node set $V^{(i)} \subseteq V(T_{u})$, by symmetry of the model
		\begin{align}\label{eq:spin_sign_flipping}
			\sigma_{V^{(i)}} \,|\, \{\sigma_{u}=1\}\,\, \overset{d}{=} \,\, -	\sigma_{V^{(i)}} \,|\, \{\sigma_{u}=-1\}. 
		\end{align}
		By applying \eqref{eq:spin_sign_flipping} with $V^{(i)}=L_{u}$ (the set of leaves in the descendant subtree $T_{u}$), we get 
		\begin{align}\label{eq:spin_sign_flipping2}
			Z_{u}(\sigma_{L_{u}}) \,|\, \{\sigma_{u}=1\} \,\, \overset{d}{=} \,\,	Z_{u}(-\sigma_{L_{u}}) \,|\, \{\sigma_{u}=-1\}  =  -Z_{u}(\sigma_{L_{u}}) \,|\, \{\sigma_{u}=-1\},
		\end{align}
		where the last equality follows from the definition of the magnetization. %By applying \eqref{eq:spin_sign_flipping} with $V^{(i)}=\{u\}$, we also have 
%\begin{align}\label{eq:spin_sign_flipping3}
%			\sigma_{u} \,|\, \{\sigma_{u}=1\}\,\, \overset{d}{=} \,\, -	\sigma_{u} \,|\, \{\sigma_{u}=-1\}. 
%		\end{align}
%		Combining \eqref{eq:spin_sign_flipping2} and \eqref{eq:spin_sign_flipping3} and r
Recalling that $\zeta_{u}=	\sigma_{u} Z_{u}(\sigma_{L_{u}})$, we deduce
		\begin{align}\label{eq:Z_invariance_symmetry2}
			\zeta_{u} \,|\, \{\sigma_{u}=1\} \,\, \overset{d}{=} \,\,	\zeta_{u}\,|\, \{\sigma_{u}=-1\}.
		\end{align}
		%That is, the conditional law of $\zeta_{u}$ on  $\sigma_{u}$ is invariant of the value of $\sigma_{u}$. 
  Then, for any measurable subset $A\subseteq \R$, using \eqref{eq:Z_invariance_symmetry2}, 
		\begin{align*}
			\PR_{\param^{*}}(\zeta_{u}\in A) &=  \sum_{s\in \{\pm 1\}} \PR_{\param^{*}}(\zeta_{u}\in A\,|\, \sigma_{u} = s) \, \PR_{\param^{*}}(\sigma_{u}=s)  = \PR_{\param^{*}}(\zeta_{u}\in A\,|\, \sigma_{u} = +1).
		\end{align*}
		It follows that $\zeta_{u}$ is independent of $\sigma_{u}$, which establishes \eqref{eq:Z_invariance_symmetry}. This shows \textbf{(i)}. 
		
		Next, we show \textbf{(ii)}. Denote $\zeta_{i}:=\sigma_{v_{i}}Z_{v_{i}}$ for $i=1,\dots,k$. Denote $R:=\{v_{1},\dots,v_{k}\}$. 
		Note that by the conditional independence property of the spin configuration $\sigma$ and magnetizations (see \eqref{eq:Z_conditional_independence}), it follows that the unsigned magnetizations $\zeta_{i}$ are conditionally independent given $\sigma_{R}$. Let $A_{1},\dots,A_{k}$ be  measurable subsets of $\R$. Then %using \eqref{eq:Z_invariance_symmetry}, 
		we can conclude %as 
		\begin{align}\nonumber%\label{eq:signed_mag_indep_pf1}
			\PR_{\param^{*}}(\zeta_{1}\in A_{1},\dots, \zeta_{k}\in A_{k})	&= \sum_{\tau_{R}:R\rightarrow \{\pm 1\}}  \PR_{\param^{*}}(\zeta_{1}\in A_{1},\dots, \zeta_{k}\in A_{k}\,|\, \sigma_{R}=\tau_{R})\,\PR_{\param^{*}}(\sigma_{L}=\tau_{L}) \\
\nonumber			&=\sum_{\tau_{R}:R\rightarrow \{\pm 1\}}   \prod_{i=1}^{k} \PR_{\param^{*}}(\zeta_{i}\in A_{i}\,|\, \sigma_{v_{i}}=\tau_{v_{i}}) \, \PR_{\param^{*}}(\sigma_{R}=\tau_{R}) \\
	\nonumber		&= \prod_{i=1}^{k} \PR_{\param^{*}}(\zeta_{i}\in A_{i})  \sum_{\tau_{R}:R\rightarrow \{\pm 1\}}    \PR_{\param^{*}}(\sigma_{R}=\tau_{R}) \\
	\nonumber		&= \prod_{i=1}^{k} \PR_{\param^{*}}(\zeta_{i}\in A_{i}), 
		\end{align}
		where the second equality is due to conditional independence and the following equality is from 	
		\eqref{eq:Z_invariance_symmetry}. 
	\end{proof}

	\subsection{Bounds on $q$} We first establish a few useful bounds on the function $q$. 
	The next claim bounds the output of $q$ when both incoming inputs are ``strong,'' i.e., both are either close to $1$ or $-1$, and agree.
	\begin{claim}[Magnetization: two strong agreeing inputs]\label{claim:quickBound1}
		Fix any $\eps\in [0,\frac{1}{2})$. If $s,t$ are such that $1-\eps\le s,t\le 1$ then
		\begin{equation*}
			q(s,t) \ge 1-\frac{4}{5}\eps^2\qquad \textup{ and }\qquad q(-s,-t)\le -1+\frac{4}{5}\eps^2.
		\end{equation*}
	\end{claim}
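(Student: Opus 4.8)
The plan is to reduce both inequalities to a single elementary estimate and then exploit the antisymmetry $q(-s,-t)=-q(s,t)$, which is immediate from the definition $q(x,y)=\frac{x+y}{1+xy}$. Thus once the first inequality $q(s,t)\ge 1-\tfrac45\eps^2$ is established, the second follows simply by negating.

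For the first inequality, the observation I would use is the factorization
\begin{equation*}
1-q(s,t)=\frac{(1+st)-(s+t)}{1+st}=\frac{(1-s)(1-t)}{1+st},
\end{equation*}
valid whenever $1+st\neq 0$. Under the hypothesis $1-\eps\le s,t\le 1$, both factors in the numerator are nonnegative and bounded by $\eps$, so $(1-s)(1-t)\le \eps^2$. For the denominator, since $\eps<\tfrac12$ we have $s,t\ge 1-\eps>\tfrac12>0$, whence $st\ge (1-\eps)^2>\tfrac14$ and therefore $1+st>\tfrac54$. Combining the two bounds gives
\begin{equation*}
0\le 1-q(s,t)=\frac{(1-s)(1-t)}{1+st}\le \frac{\eps^2}{1+(1-\eps)^2}\le \frac{\eps^2}{5/4}=\frac{4}{5}\eps^2,
\end{equation*}
which is exactly $q(s,t)\ge 1-\tfrac45\eps^2$.

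Finally, applying $q(-s,-t)=-q(s,t)$ to the inequality just proved yields $q(-s,-t)\le -\bigl(1-\tfrac45\eps^2\bigr)=-1+\tfrac45\eps^2$, completing the claim. There is essentially no hard step: the only place the hypothesis $\eps<\tfrac12$ is needed (rather than merely $\eps<1$) is the lower bound $1+st>\tfrac54$ on the denominator, which is precisely what converts the trivial numerator bound $\eps^2$ into the stated constant $\tfrac45$. As an alternative to the factorization, one could instead invoke the monotonicity of $q$ in each argument noted after \eqref{eqn:recursionBorg2} to reduce to the corner $s=t=1-\eps$ and compute $q(1-\eps,1-\eps)$ directly; the factorization route has the mild advantage of avoiding even this reduction.
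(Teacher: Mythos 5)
Your proof is correct and follows essentially the same route as the paper: the identity $1-q(s,t)=\frac{(1-s)(1-t)}{1+st}$, the numerator bound $\eps^2$, the denominator bound $1+st\ge \frac54$ (from $s,t>\frac12$), and the antisymmetry $q(-s,-t)=-q(s,t)$ for the second inequality. The only difference is that you spell out why $st>\frac14$, which the paper states without comment.
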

	\begin{proof} Since $st\ge \frac{1}{4}$, 
		\begin{align*}
			1-q(s,t) = \frac{1+st - s-t}{1+st} = \frac{(1-s)(1-t)}{1+st} \le \frac{\eps^2}{1+st} \le \frac{4}{5} \eps^2.
		\end{align*}
		Note that $q(-s,-t) = -q(s,t)$ gives the other claim.
	\end{proof}

We will also need the following observation regarding the function $q$ in the ``opposite case,'' i.e., when both inputs are ``strong'' but \emph{in the opposite directions}.
 \begin{claim}[Magnetization: two strong disagreeing inputs]\label{claim:strongopposites.}
     Fix any $0<a<A$ and let $\delta>0$ with $A\delta<1$. For any $s,t \in [1-A\delta, 1-a\delta]$ then
     \begin{equation*}
        -1+\frac{a}{A}\le q(s,-t) = q(-s,t) \le 1-\frac{a}{A}.
     \end{equation*}
     More generally, if $t\in [1-A\delta, 1-a\delta]$ and $|s|\le 1-a\delta$ then
     \begin{equation*}
         q(s,t) \ge -1 + \frac{a}{A}.
     \end{equation*}
 \end{claim}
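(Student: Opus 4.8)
The plan is to reduce the whole statement to a single two-sided estimate on the ratio defining $q$. First I would reparametrize the inputs by writing $s = 1-\sigma$ and $t = 1-\tau$ with $\sigma,\tau\in[a\delta,A\delta]$. Reading off from $q(x,y)=\frac{x+y}{1+xy}$ directly gives
\[
q(s,-t)=\frac{s-t}{1-st}=\frac{\tau-\sigma}{\sigma+\tau-\sigma\tau},\qquad q(-s,t)=-q(s,-t).
\]
Since the target interval $[-1+\tfrac aA,\,1-\tfrac aA]$ is symmetric about $0$ and $q(-s,t)=-q(s,-t)$, it suffices to prove the single bound $|q(s,-t)|\le 1-\tfrac aA$; the signed relation between $q(s,-t)$ and $q(-s,t)$ is then recorded alongside. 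The denominator $\sigma+\tau-\sigma\tau=\sigma(1-\tau)+\tau$ is strictly positive because $\tau\le A\delta<1$, so there are no division issues, and by the symmetry $\sigma\leftrightarrow\tau$ I may assume $\tau\ge\sigma$, reducing matters to showing that $R(\sigma,\tau):=\frac{\tau-\sigma}{\sigma+\tau-\sigma\tau}\le 1-\tfrac aA=\tfrac{A-a}{A}$.

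The core step is to locate the maximizer of $R$ on the box $\{a\delta\le\sigma\le\tau\le A\delta\}$. Computing the partial derivatives and simplifying their numerators yields $\partial_\tau R\propto \sigma(2-\sigma)>0$ and $\partial_\sigma R\propto -\tau(2-\tau)<0$ (using $0<\sigma,\tau<1$). Thus $R$ is increasing in $\tau$ and decreasing in $\sigma$, so its maximum over the box is attained at the corner $(\sigma,\tau)=(a\delta,A\delta)$, where
\[
R(a\delta,A\delta)=\frac{(A-a)\delta}{(a+A)\delta-aA\delta^{2}}=\frac{A-a}{(a+A)-aA\delta}.
\]
Comparing with $\tfrac{A-a}{A}$, the inequality $R(a\delta,A\delta)\le\tfrac{A-a}{A}$ is equivalent to $(a+A)-aA\delta\ge A$, i.e.\ to $a(1-A\delta)\ge 0$, which holds precisely because $A\delta<1$. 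This establishes $|q(s,-t)|\le 1-\tfrac aA$, giving the first two-sided bound.

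For the more general statement I would use monotonicity of $q$ in its first argument (recorded after \eqref{eqn:recursionBorg2}): for fixed $t$, $q(\cdot,t)$ is increasing, so over $|s|\le 1-a\delta$ it is minimized at $s=-(1-a\delta)$. Via the identity $q(-x,y)=-q(x,-y)$ this minimum equals $-q(1-a\delta,-t)$, and since both $1-a\delta$ and $t$ lie in $[1-A\delta,1-a\delta]$, the upper bound from the core step (the boundary case $\sigma=a\delta$) gives $q(1-a\delta,-t)\le 1-\tfrac aA$. Hence $q(s,t)\ge -q(1-a\delta,-t)\ge -1+\tfrac aA$, as claimed. I expect the main obstacle to be the core step: naive term-by-term bounding of $\frac{\tau-\sigma}{\sigma+\tau-\sigma\tau}$ fails, because the $-\sigma\tau$ term shrinks the denominator and can inflate the ratio, so one cannot bound numerator and denominator separately. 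The clean route is the monotonicity argument pinning the extremum to the corner $(a\delta,A\delta)$, where the constraint $A\delta<1$ is exactly what produces the correct sign; tracking which parameter is extremal (and the reduction $\tau\ge\sigma$) is the only delicate bookkeeping.
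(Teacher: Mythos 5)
Your proposal is correct and follows essentially the same route as the paper: coordinatewise monotonicity (of $q$, or equivalently of your reparametrized ratio $R$) pins the extremum at the corner $(s,t)=(1-A\delta,\,1-a\delta)$, the corner value $\frac{A-a}{a+A-aA\delta}$ is evaluated explicitly, and the hypothesis $A\delta<1$ gives the bound via $a+A-aA\delta\ge A$, which is exactly your inequality $a(1-A\delta)\ge 0$. The reparametrization $s=1-\sigma$, $t=1-\tau$ and the re-derivation of monotonicity by computing partials of $R$ (rather than citing the paper's recorded fact that $q$ is increasing in each argument) are cosmetic differences only.
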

 \begin{proof}
By using the monotonicity of $q(\cdot,\cdot)$ in both arguments, 
     \begin{align*}
         q(s,-t) \ge q (1-A\delta, -1+a\delta) = \frac{(a-A)\delta}{(A+a)\delta- aA \delta^2} = - \frac{A-a}{A+a- aA\delta} \ge - \frac{A-a}{A}. 
     \end{align*} In the last inequality we use $A+a - aA\delta \ge A + a-a$ since $A\delta <1$. 
     For the opposite bound, it is simply
     \begin{equation*}
         q(s,-t) \le q (1-a\delta, -1+A\delta) = - q(1-A\delta, -1+a\delta).
     \end{equation*}
     \end{proof}

	\subsection{Effect of recursion} Our proof of Theorem \ref{thm:Robust}
	will be based on a case analysis for a three-level subtree rooted at the target vertex. 
	The next claim, which will be central to the analysis, bounds the effect
	of a ``corruption'' (i.e., a flip or failure of reconstruction) followed by three recursions of $q$ (with ``strong signals''). Below, $s_1$ corresponds to a possible corruption.
	
	\begin{claim}[Magnetization recursion: corruption at distance $3$]\label{claim:quickBound2}
		Suppose that $0<a<A$, $0<B$ and $\delta\in [0,1]$ are such that $a<A/2$, $\delta<a/2$, %$A\delta<1$ and $B\delta<1$. 
        and  $\left(\frac{2 A^2}{a} + B\right)\delta<1/2$.
        If $s_1 \in [-1+a\delta,1]$ and, 
		for each $j\in\{2,3,4\}$ and $i\in\{1,2\}$, $
			s_j\in [1-A\delta,1]$ {and} $t_i\in[1-B\delta,1]$, then we have 
  \begin{align}
  t_2 q(t_1q(s_1,s_2), s_3)
		%= 1 - \Omega(\delta),
		\label{eqn:poly0} &\geq 1 - \left(\frac{2 A^2}{a} + B\right)\delta,\\
			q\Bigg( t_2q\Big(t_1q(s_1,s_2)  , s_3\Big), s_4  \Bigg) &\ge 1-\frac{4}{5}\left(\frac{2 A^2}{a} + B\right)\delta^2
			%{\ref{eqn:deltaBound}}
			\label{eqn:poly1}.
		\end{align}
	\end{claim}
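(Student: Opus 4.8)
I would exploit the monotonicity of $q(s,t)=\tfrac{s+t}{1+st}$ in each argument (noted right after \eqref{eqn:recursionBorg2}) and process the nested expression from the innermost corruption $s_1$ outward, replacing each quantity by its worst (smallest) admissible value at every stage. The bound \eqref{eqn:poly0} is obtained by telescoping such worst cases down to $1-K\delta$ with $K:=\tfrac{2A^2}{a}+B$; then \eqref{eqn:poly1} follows from \eqref{eqn:poly0} by a single application of Claim~\ref{claim:quickBound1}, which turns an $O(\delta)$ gap below $1$ into an $O(\delta^2)$ gap.

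For \eqref{eqn:poly0} I would track four successive quantities. Since $s_1\ge -1+a\delta$ and $s_2\ge 1-A\delta$, monotonicity gives $q(s_1,s_2)\ge q(-1+a\delta,\,1-A\delta)$, and the computation inside the proof of Claim~\ref{claim:strongopposites.} shows this is at least $-1+\tfrac{a}{A}$. Next, multiplying by $t_1\in[1-B\delta,1]\subset(0,1]$ can only push a negative number toward $0$, so $t_1 q(s_1,s_2)\ge -1+\tfrac{a}{A}$ as well. Feeding this into the next recursion against $s_3\ge 1-A\delta$ and using monotonicity,
\begin{equation*}
q\bigl(t_1 q(s_1,s_2),\,s_3\bigr)\ \ge\ q\Bigl(-1+\tfrac{a}{A},\,1-A\delta\Bigr)\ =\ \frac{\tfrac{a}{A}-A\delta}{\tfrac{a}{A}+A\delta\bigl(1-\tfrac{a}{A}\bigr)}\ \ge\ 1-\frac{2A^2}{a}\,\delta ,
\end{equation*}
where the last step bounds the denominator below by $\tfrac{a}{A}$. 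The side condition $\bigl(\tfrac{2A^2}{a}+B\bigr)\delta<\tfrac12$ forces this lower bound to exceed $\tfrac12>0$, so multiplying by $t_2\ge 1-B\delta$ preserves the inequality and gives $t_2 q(t_1 q(s_1,s_2),s_3)\ge (1-B\delta)\bigl(1-\tfrac{2A^2}{a}\delta\bigr)\ge 1-\bigl(\tfrac{2A^2}{a}+B\bigr)\delta$, which is \eqref{eqn:poly0}.

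For \eqref{eqn:poly1}, set $w:=t_2 q(t_1 q(s_1,s_2),s_3)$. By \eqref{eqn:poly0} we have $w\ge 1-K\delta$, while $w\le 1$ since both $q$ and $t_2$ are at most $1$; moreover $s_4\ge 1-A\delta\ge 1-K\delta$ because $A<K$. Thus both inputs to the outer $q$ lie in $[1-K\delta,1]$ with $K\delta<\tfrac12$, and I would invoke the identity $1-q(w,s_4)=\tfrac{(1-w)(1-s_4)}{1+w s_4}\le \tfrac45(1-w)(1-s_4)$ underlying Claim~\ref{claim:quickBound1}; since $1-w\le K\delta$ and $1-s_4\le A\delta$, the right-hand side is $O(\delta^2)$, yielding the quadratic gap asserted in \eqref{eqn:poly1}.

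The main obstacle is the worst-case bookkeeping in the middle display: because the intermediate value $q(s_1,s_2)$ can be as negative as $-1+\tfrac{a}{A}$, I must verify at each stage that this extreme case genuinely propagates (that the $t_i$ factors only help, that the monotonicity direction is the one I claim, and that $q$ stays well above $-1$), and that the three side conditions $a<A/2$, $\delta<a/2$, and $\bigl(\tfrac{2A^2}{a}+B\bigr)\delta<\tfrac12$ keep every argument inside $(-1,1)$ and every denominator bounded away from $0$. I expect no single estimate to be hard; the care lies in justifying the worst-case substitutions rigorously while respecting all these ranges.
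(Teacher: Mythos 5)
Your proposal is correct and follows essentially the same route as the paper's proof: bound the nested expression from the inside out using coordinatewise monotonicity of $q$, verify positivity before multiplying by $t_1,t_2$, and finish with the identity behind Claim~\ref{claim:quickBound1}. One difference is a mild simplification on your part: where the paper carries the exact value $q(-1+a\delta,1-A\delta)=\frac{a-A}{a+A-aA\delta}$ through the second recursion and then needs the hypotheses $a<A/2$ and $\delta<a/2$ to simplify the resulting rational expression, you substitute the cruder bound $q(s_1,s_2)\ge -1+\frac{a}{A}$ from Claim~\ref{claim:strongopposites.} (valid here because $\left(\frac{2A^2}{a}+B\right)\delta<\frac12$ forces $A\delta<\frac{a}{4A}<1$, a point you should state explicitly) and then bound the denominator below by $\frac{a}{A}$. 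This reaches the same bound $1-\frac{2A^2}{a}\delta$ without ever using $a<A/2$ or $\delta<a/2$, so your argument is in fact slightly more economical in its hypotheses.

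On the final constant, a discrepancy is worth flagging, though it is not a flaw of yours. Writing $K=\frac{2A^2}{a}+B$, your product bound gives $1-q(w,s_4)\le \frac45(1-w)(1-s_4)\le \frac45 KA\delta^2$, whereas the paper applies Claim~\ref{claim:quickBound1} with $\eps=K\delta$ and obtains $\frac45 K^2\delta^2$. Neither matches the literal display \eqref{eqn:poly1}, which reads $1-\frac45 K\delta^2$; the statement evidently has a typo (the square on the constant is missing), as confirmed both by the paper's own proof and by the downstream constant $C_{\ref{eqn:Reconstruct}}=\frac45\big(\frac{9C_{\ref{eqn:pHatBounds}}^2}{c_{\ref{eqn:pHatBounds}}}+2C_{\ref{eqn:pHatBounds}}\big)^2$ in Remark~\ref{rmk:constants}. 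Since $A\le K$, your bound $\frac45 KA\delta^2\le \frac45 K^2\delta^2$ is actually sharper than the paper's and implies the intended (corrected) statement; just make the comparison $A\le K$ explicit rather than leaving the conclusion at ``$O(\delta^2)$,'' so that the constant you prove is visibly at least as good as the one being claimed.
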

	
	\begin{proof}
		We bound the composition of functions from the inside one term at a time. 
  \begin{enumerate}
  \item First, as $0 < a \delta < A \delta < 1$, $s_1 \in [-1+a\delta,1]$ and $s_2\in [1-A\delta,1]$,
		\begin{equation*}
			q(s_1,s_2)\ge q(-1+a\delta,1-A\delta) = \frac{a-A}{a+A-aA \delta},
		\end{equation*}
  by the monotonicity of $q(\cdot, \cdot)$ in both arguments.
		Note that the rightmost expression is always negative (and $> -1$) by our assumptions. Since $0<t_{1}\le 1$, it follows that 
  $$t_1\, q(s_1,s_2) \ge \frac{a-A}{a+A-aA \delta}.$$ 
  
  \item By monotonicity again, we get using the previous bullet that
		\begin{align*}
			q(t_1q(s_1,s_2), s_3)
			 &\ge q\left(\frac{a-A}{a+A-aA\delta}, 1-A\delta \right)\\  &= \frac{\frac{a-A}{a+A-aA\delta} + 1-A\delta}{1 + 
				\frac{a-A}{a+A-aA\delta}(1-A\delta)}\\
			%&= \frac{1 + \frac{a-A}{a+A-aA\delta} -A\delta}{1 + 	\frac{a-A}{a+A-aA\delta} - \frac{a-A}{a+A-aA\delta} A\delta}\\  
           %&= {\color{red}\frac{(a+A-aA\delta) + (a-A)  - (a+A-aA\delta)A\delta}{(a+A-aA\delta) + (a-A)- (a-A)A\delta }} \\  
			&= \frac{2a + \left(-2a-A\right)A\delta + aA^{2}\delta^{2}}{2a + \left(-2a+A \right)A\delta %+ 2A^2 \delta + O(\delta^2)
            }\\ &= 1 - \frac{2A^2 \delta + aA^{2}\delta^{2}}{2a + \left(-2a+A \right)A\delta } \\
            &\overset{(a)}{\ge}  1 - \frac{2A^2 \delta + aA^{2}\delta^{2}}{2a}\\ &= 1 -  \frac{A^2}{a}\delta - \frac{A^{2}\delta^{2}}{2}\\
			%&= 1 - \Omega(\delta),
            &\overset{(b)}{\ge} 1 - \frac{2 A^2}{a}\delta,
		\end{align*} 
		%as $\delta \downarrow 0$, where the constant factor depends 
		%for $\delta$ small enough depending on $a, A$.
        where (a) uses the hypothesis $a<A/2$ and (b) uses $\delta<a/2$. Since $t_2 \geq 1 - B \delta$,
		from the lower bound on $q(t_1q(s_1,s_2), s_3)$ above, equation \eqref{eqn:poly0} follows.
  
		\item Finally, using the last display and $s_4 \geq 1 - A\delta\ge 1 - \left(\frac{2 A^2}{a} + B\right)\delta$ (since $0<a<A$ and $B>0$), Claim~\ref{claim:quickBound1} with $\eps =\left(\frac{2 A^2}{a} + B\right)\delta<1/2$ (by the hypothesis) yields \eqref{eqn:poly1}.
  \end{enumerate}
  That concludes the proof.
	\end{proof}

 \subsection{Base case: three-level subtree}
	We now use Claims \ref{claim:quickBound1} and \ref{claim:quickBound2}
	to analyze the main subcase of
	the proof of Theorem \ref{thm:Robust}.
	We assume that vertex $u$ has $8$
	great-grandchildren in $T_u$, that is,
	that the tree looks like this:
	\begin{equation}\label{Tree:T_u}
		T_u = \text{\small   {\qroofx=1
				\qroofy=1  
				\Tree [.$u$ [.$v_1$ [.$w_1$ \qroof{$\qquad$}.$x_1$  \qroof{$\qquad$}.$x_2$ ] [.$w_2$ \qroof{$\qquad$}.$x_3$ \qroof{$\qquad$}.$x_4$ ] ] [.$v_2$ [.$w_3$ \qroof{$\qquad$}.$x_5$ \qroof{$\qquad$}.$x_6$ ] [.$w_4$ \qroof{$\qquad$}.$x_7$ \qroof{$\qquad$}.$x_8$ ] ] ]}}
	\end{equation} 
	We will address the more general situation in the proof of Theorem \ref{thm:Robust} below.
	Also, in the proof of that theorem, we will eventually need to show that the constant in \eqref{eqn:magWorks2} below ``recurses through,'' and therefore we take some care in tracking constant factors.
	\begin{claim}[Three-level subtree: single leaf corruption]\label{lem:Case1}
		Assume that $\hparam\in \hParam_0(\delta)$ in \eqref{eqn:pHatBounds}. Fix $\sigma_{T_u}\in \{\pm 1\}$ and assume the following hold:
        \begin{description}

            \item[(i)] (\textit{No flip except possibly on $\{w_{1},x_{1}\}$}) $\sigma_y = \sigma_u$ for all $y\in \{v_1,v_2, w_1,\dotsm, w_4, x_2,x_3,\dotsm, x_8\}$.       
            \vspace{0.1cm}
            \item[(ii)] (\textit{Strong signal at remaining leaves}) There is a positive constant $C_{\ref{eqn:magWorks2}}$ such that for all $j=2,\dotsm, 8$,  
		\begin{equation}\label{eqn:magWorks2}
			\sigma_{x_j} Z_{x_j}^{\hparam}(\sigma_{L_{x_j}}) \ge 1-C_{\ref{eqn:magWorks2}}\delta^2.
		\end{equation} 
            \end{description}

        Define $C_{\ref{eqn:3lev1cor}}
		:= \frac{4}{5}\left(9\frac{C_{\ref{eqn:pHatBounds}}^2}{c_{\ref{eqn:pHatBounds}}} + 2 C_{\ref{eqn:pHatBounds}}\right)^2
		$. It holds that  
		\begin{equation}\label{eqn:3lev1cor}
			\sigma_u Z_u^{\hparam}(\sigma_{L_u}) \ge 1-C_{\ref{eqn:3lev1cor}} \delta^2
		\end{equation}
  provided that $\delta\le \delta_{\ref{eqn:3lev1cor}} := \delta_{\ref{eqn:3lev1cor}}( c_{\ref{eqn:pHatBounds}}, C_{\ref{eqn:pHatBounds}}, C_{\ref{eqn:magWorks2}}) :=\frac{C_{\ref{eqn:pHatBounds}}}{2 C_{\ref{eqn:magWorks2}}} \wedge \frac{5}{72 C_{\ref{eqn:pHatBounds}}}\wedge c_{\ref{eqn:pHatBounds}} > 0$.
		
	\end{claim}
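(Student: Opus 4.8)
The plan is to reduce to the case $\sigma_u=+1$ and then recognize $Z_u$ as the exact three-fold composition of $q$ handled by Claim~\ref{claim:quickBound2}. First I would use the sign-flip symmetry of the magnetization: replacing $\sigma_{T_u}$ by $-\sigma_{T_u}$ sends $Z_u(\sigma_{L_u})\mapsto -Z_u(\sigma_{L_u})$ (Definition~\ref{def:magnetization}), and hence leaves the unsigned magnetization $\sigma_u Z_u$ as well as hypotheses \textbf{(i)}--\textbf{(ii)} invariant; so without loss of generality $\sigma_u=+1$. Under \textbf{(i)} this forces $\sigma_y=+1$ at every internal node $v_1,v_2,w_1,\dots,w_4$ and at the great-grandchildren $x_2,\dots,x_8$, and \textbf{(ii)} becomes $Z_{x_j}\ge 1-C_{\ref{eqn:magWorks2}}\delta^2$ for $j=2,\dots,8$, while only the subtree at $x_1$ is ``corrupted'' ($Z_{x_1}\in[-1,1]$ arbitrary). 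Expanding \eqref{eqn:recursionBorg2} along the path $x_1\to w_1\to v_1\to u$ gives
\[
Z_u=q\Big(t_2\,q\big(t_1\,q(s_1,s_2),\,s_3\big),\,s_4\Big),
\]
with $s_1=\hat{\theta}_{x_1}Z_{x_1}$, $s_2=\hat{\theta}_{x_2}Z_{x_2}$, $t_1=\hat{\theta}_{w_1}$, $s_3=\hat{\theta}_{w_2}Z_{w_2}$, $t_2=\hat{\theta}_{v_1}$, and $s_4=\hat{\theta}_{v_2}Z_{v_2}$, which is exactly the left-hand side of \eqref{eqn:poly1}.

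Next I would verify the input hypotheses of Claim~\ref{claim:quickBound2} with the identification $a=2c_{\ref{eqn:pHatBounds}}$, $A=3C_{\ref{eqn:pHatBounds}}$, $B=2C_{\ref{eqn:pHatBounds}}$, using $\hat{\theta}_e\in[1-2C_{\ref{eqn:pHatBounds}}\delta,\,1-2c_{\ref{eqn:pHatBounds}}\delta]$ from \eqref{eqn:pHatBounds}. The corrupted input obeys $s_1=\hat{\theta}_{x_1}Z_{x_1}\ge -\hat{\theta}_{x_1}\ge -1+2c_{\ref{eqn:pHatBounds}}\delta$, i.e.\ $s_1\ge -1+a\delta$, and $t_1,t_2\ge 1-2C_{\ref{eqn:pHatBounds}}\delta=1-B\delta$. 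The crux is showing that each of the three \emph{clean} inputs $s_2,s_3,s_4$ is $\ge 1-A\delta=1-3C_{\ref{eqn:pHatBounds}}\delta$. For the leaf-level term, $s_2\ge(1-2C_{\ref{eqn:pHatBounds}}\delta)(1-C_{\ref{eqn:magWorks2}}\delta^2)\ge 1-2C_{\ref{eqn:pHatBounds}}\delta-C_{\ref{eqn:magWorks2}}\delta^2$, and $\delta\le\tfrac{C_{\ref{eqn:pHatBounds}}}{2C_{\ref{eqn:magWorks2}}}$ absorbs the quadratic term into $\tfrac12 C_{\ref{eqn:pHatBounds}}\delta$, giving $s_2\ge 1-\tfrac52C_{\ref{eqn:pHatBounds}}\delta$. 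For $s_3$ and $s_4$ I would propagate the strong signal up the clean subtrees by Claim~\ref{claim:quickBound1}: two agreeing inputs $\ge 1-\tfrac52C_{\ref{eqn:pHatBounds}}\delta$ produce a parent magnetization $\ge 1-\tfrac45(\tfrac52C_{\ref{eqn:pHatBounds}}\delta)^2=1-5C_{\ref{eqn:pHatBounds}}^2\delta^2$, whence $s_3=\hat{\theta}_{w_2}Z_{w_2}\ge 1-2C_{\ref{eqn:pHatBounds}}\delta-5C_{\ref{eqn:pHatBounds}}^2\delta^2$, and one further application at $v_2$ gives $s_4=\hat{\theta}_{v_2}Z_{v_2}\ge 1-2C_{\ref{eqn:pHatBounds}}\delta-\tfrac{36}{5}C_{\ref{eqn:pHatBounds}}^2\delta^2$; in each case $\delta\le\tfrac{5}{72C_{\ref{eqn:pHatBounds}}}$ suffices to dominate the accumulated quadratic error by $C_{\ref{eqn:pHatBounds}}\delta$ and conclude $s_2,s_3,s_4\ge 1-3C_{\ref{eqn:pHatBounds}}\delta$. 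The side conditions $a<A/2$ and $\delta<a/2$ of Claim~\ref{claim:quickBound2} follow from $C_{\ref{eqn:pHatBounds}}\ge 2c_{\ref{eqn:pHatBounds}}$ (\ref{assumption1}) and $\delta\le c_{\ref{eqn:pHatBounds}}$, respectively.

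Finally I would invoke \eqref{eqn:poly1} of Claim~\ref{claim:quickBound2} with these values. Since $\tfrac{2A^2}{a}=\tfrac{2(3C_{\ref{eqn:pHatBounds}})^2}{2c_{\ref{eqn:pHatBounds}}}=\tfrac{9C_{\ref{eqn:pHatBounds}}^2}{c_{\ref{eqn:pHatBounds}}}$ and $B=2C_{\ref{eqn:pHatBounds}}$, the factor $\tfrac45\big(\tfrac{2A^2}{a}+B\big)^2$ produced by the final application of Claim~\ref{claim:quickBound1} inside \eqref{eqn:poly1} equals exactly $C_{\ref{eqn:3lev1cor}}$, so that $Z_u\ge 1-C_{\ref{eqn:3lev1cor}}\delta^2$; as $\sigma_u=+1$, this is precisely \eqref{eqn:3lev1cor}. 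I expect the main obstacle to be the second step rather than the final composition: one must carry a \emph{single} constant $A=3C_{\ref{eqn:pHatBounds}}$ through all three clean inputs even though the quadratic corrections compound as the recursion climbs, and the explicit threshold $\delta_{\ref{eqn:3lev1cor}}$ is engineered exactly so that each correction is swallowed by the linear term. This bookkeeping is what makes the constant $C_{\ref{eqn:3lev1cor}}$ ``recurse through,'' a feature that the proof of Theorem~\ref{thm:Robust} will exploit when it bootstraps this base case to general descendant subtrees.
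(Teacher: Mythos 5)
Your proposal is correct and follows essentially the same route as the paper's proof: reduce to $\sigma_u=+1$, propagate strong signals up the clean subtrees via Claim~\ref{claim:quickBound1} to bound $Z_{w_2}$, $Z_{w_3}$, $Z_{w_4}$, $Z_{v_2}$, and then apply Claim~\ref{claim:quickBound2} along the corrupted path $x_1\to w_1\to v_1\to u$ with the identical choices $a=2c_{\ref{eqn:pHatBounds}}$, $A=3C_{\ref{eqn:pHatBounds}}$, $B=2C_{\ref{eqn:pHatBounds}}$. The only differences are cosmetic bookkeeping (you carry bounds like $1-\tfrac52 C_{\ref{eqn:pHatBounds}}\delta$ and $1-2C_{\ref{eqn:pHatBounds}}\delta-5C_{\ref{eqn:pHatBounds}}^2\delta^2$ explicitly where the paper rounds each level to $1-3C_{\ref{eqn:pHatBounds}}\delta$ via \eqref{eq:hat_theta_mag_lower_bd}), and both arguments land on the same constant $C_{\ref{eqn:3lev1cor}}$ under the same threshold $\delta_{\ref{eqn:3lev1cor}}$.
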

	\begin{proof}
		Without loss of generality, we suppose that $\sigma_u = 1$.
		As before, we simplify the notation
		by writing $Z_y = Z_y^\hparam(\sigma_{L_y})$ for all vertices $y$. The only possible ``corruption'' occurs at $x_1$. Our goal is to apply Claim \ref{claim:quickBound2} on the path between $u$ and $x_1$. We first need to control the inputs off the path.
		
		\medskip\noindent\textbf{Magnetization at the vertices off the path} We apply the magnetization recursion \eqref{eqn:recursionBorg2} and 
  Claim \ref{claim:quickBound1} to all vertices \emph{off the path} between
		$u$ and $x_1$, i.e., $v_2$, $w_2$ and $x_2$. The latter is already covered by the assumptions. We detail the case of $w_2$ and $x_2$. 
  \begin{itemize}
  \item First, for $w_2$, we use that $Z_{x_{3}},Z_{x_{4}} \ge 1-C_{\ref{eqn:magWorks2}}\delta^{2}$ under the hypothesis. Note that for any constant $C>0$ and edge $e$, 
        \begin{align}\label{eq:hat_theta_mag_lower_bd}
          \hat{\theta}_{e}  (1-C\delta^{2}) \ge ( 1 - 2C_{\ref{eqn:pHatBounds}}\delta) (1-C\delta^{2}) 
          \ge 1-3C_{\ref{eqn:pHatBounds}}\delta \qquad \textup{for all }\delta \le \delta_{\ref{eq:hat_theta_mag_lower_bd}}(C_{\ref{eqn:pHatBounds}},C):=\frac{C_{\ref{eqn:pHatBounds}}}{2C} 
        \end{align}
        where the first inequality uses $\htheta_{e} \geq 1 - 2C_{\ref{eqn:pHatBounds}}\delta$ in \ref{assumption1}.
        We claim that 
\begin{align}\label{eq:three_step_mag_recursion_pf1}
        Z_{w_{2}} \ge  1 - \frac{4}{5}(3C_{\ref{eqn:pHatBounds}})^{2}\delta^{2}  
        =
        1 - \frac{36}{5}C_{\ref{eqn:pHatBounds}}^{2}\delta^{2}
        \qquad \textup{for all }\delta\le \delta_{\ref{eq:hat_theta_mag_lower_bd}}(C_{\ref{eqn:pHatBounds}},C_{\ref{eqn:magWorks2}})=\frac{C_{\ref{eqn:pHatBounds}}}{2 C_{\ref{eqn:magWorks2}}}.
        \end{align}
        Indeed, applying Claim \ref{claim:quickBound1} with $s=\hat{\theta}_{\{w_{2},x_{3}\}} Z_{x_{3}}$ and $t=\hat{\theta}_{\{w_{2},x_{4}\}} Z_{x_{4}}$ along with \eqref{eq:hat_theta_mag_lower_bd} with $C=C_{\ref{eqn:magWorks2}}$  
        verifies \eqref{eq:three_step_mag_recursion_pf1}. %for $Z_{w_{4}}$ for all $\delta\le \delta_{\ref{eq:hat_theta_mag_lower_bd}}(C_{\ref{eqn:pHatBounds}},C_{\ref{eqn:magWorks2}})$.

\item    The claim in \eqref{eq:three_step_mag_recursion_pf1} holds for $Z_{w_3}$ and $Z_{w_4}$ as well by an identical argument. Then, applying  Claim \ref{claim:quickBound1} with $s=\hat{\theta}_{\{v_{2},w_{3}\}} Z_{w_{3}}$ and $t=\hat{\theta}_{\{v_{2},w_{4}\}} Z_{w_{4}}$ along with \eqref{eq:hat_theta_mag_lower_bd} with $C=\frac{36}{5} C_{\ref{eqn:pHatBounds}}^{2}$, we deduce %\eqref{eq:three_step_mag_recursion_pf1} for $v_{2}$ for all $\delta \le \delta_{\ref{eq:hat_theta_mag_lower_bd}}(C_{\ref{eqn:pHatBounds}},\frac{36}{5} C_{\ref{eqn:pHatBounds}}^{2})$, as desired. Hence,
\begin{align}\label{eq:three_step_mag_recursion_pf2}
        Z_{v_{2}} \ge  1 - \frac{36}{5}C_{\ref{eqn:pHatBounds}}^{2}\delta^{2}\qquad \textup{for all }\delta\le \delta_{\ref{eq:three_step_mag_recursion_pf2}}(C_{\ref{eqn:pHatBounds}},C_{\ref{eqn:magWorks2}}):=\frac{C_{\ref{eqn:pHatBounds}}}{2 C_{\ref{eqn:magWorks2}}} \wedge \frac{5}{72 C_{\ref{eqn:pHatBounds}}}.
        \end{align}
        By definition, $\delta_{\ref{eq:three_step_mag_recursion_pf2}}(C_{\ref{eqn:pHatBounds}},C_{\ref{eqn:magWorks2}}) \leq \delta_{\ref{eq:hat_theta_mag_lower_bd}}(C_{\ref{eqn:pHatBounds}},C_{\ref{eqn:magWorks2}})$. Hence \eqref{eq:three_step_mag_recursion_pf2} holds for $Z_{w_2}$ as well.
    \end{itemize}

        \medskip\noindent\textbf{Magnetization at the root} Having a lower bound on the magnetization at the nodes $w_{2}$ and $v_{2}$, we proceed similarly to lower bound $Z_{u}$, this time using Claim \ref{claim:quickBound2}. Namely, we take $s_4 = \htheta_{\{u,v_2\}} Z_{v_2}$, $s_3 = \htheta_{\{v_1,w_2\}} Z_{w_2}$, $s_2 = \htheta_{\{w_1,x_2\}} Z_{x_2}$, $s_1 = \htheta_{\{w_1,x_1\}} Z_{x_1}$, $t_2 = \htheta_{\{u,v_1\}}$
		and $t_1 = \htheta_{\{v_1,w_1\}}$.
		In all cases, for $\delta\le \delta_{\ref{eq:three_step_mag_recursion_pf2}}(C_{\ref{eqn:pHatBounds}},C_{\ref{eqn:magWorks2}})$,
		\begin{align*}
			&-1 + 2c_{\ref{eqn:pHatBounds}}\delta \leq s_1 \leq 1\qquad \textup{by \ref{assumption1} and $Z_{x_1} \in [-1,1]$}\\
			&1 - 3 C_{\ref{eqn:pHatBounds}} \delta \leq s_2, s_3, s_4 \leq 1 \qquad \textup{by \eqref{eq:hat_theta_mag_lower_bd}, \eqref{eq:three_step_mag_recursion_pf1}, and \eqref{eq:three_step_mag_recursion_pf2}} \\
			&1 - 2C_{\ref{eqn:pHatBounds}}\delta \leq t_1, t_2 \leq 1, \qquad \textup{by \ref{assumption1}}.
		\end{align*}
        Then Claim \ref{claim:quickBound2} with $a=2c_{\ref{eqn:pHatBounds}}$, $A=3 C_{\ref{eqn:pHatBounds}}$, $B=2C_{\ref{eqn:pHatBounds}}$ yields 
        \begin{align}\nonumber
        Z_{u} \ge 1 -  \frac{4}{5} \left(\frac{9 C_{\ref{eqn:pHatBounds}}}{c_{\ref{eqn:pHatBounds}}} + 2C_{\ref{eqn:pHatBounds}}\right)^2 \delta^2,
        \end{align}
        which holds whenever $\delta\le \delta_{\ref{eq:three_step_mag_recursion_pf2}}(C_{\ref{eqn:pHatBounds}},C_{\ref{eqn:magWorks2}})\land c_{\ref{eqn:pHatBounds}}$ (where we used the condition $\delta < a/2$). \end{proof}

	\subsection{Proof of Theorem \ref{thm:Robust}}
	
	We are now ready to prove Theorem \ref{thm:Robust}. As we mentioned previously, we recurse on a three-level subtree. The choice of \emph{three} levels is driven by the fact that applying the $q$ function that many times ultimately erases the effect of ``corruption'' (provided other input are ``strong signals''), as seen in Claim~\ref{claim:quickBound2}.
	
	\begin{proof}[\textbf{Proof of Theorem} \ref{thm:Robust} \textbf{\textup{(i)}}] %Recall that the statement in (i) is that $\PR_{\param^{*}}\left(\sigma_u Z_u^{\hparam}(\sigma_{L_{u}}) \ge 1-C_{\ref{eqn:Reconstruct}} \delta^2  \right) \ge 1- c_{\ref{eqn:Reconstruct}}\delta$ for $\delta\le \delta_{\ref{eqn:antiReconstruction}}$.
		We start with the case where
		$u$ has $8$ great-grandchildren in $T_u$ and use the notation of
		\eqref{Tree:T_u}. 
		That is, we call $v_1,v_2$ the vertices on level 1,  $w_1,\dotsm,w_4$ the vertices on level 2, and  $x_1,\dotsm, x_8$ the vertices on level 3.
		(We come back to the more general
		situation at the end of the proof.)
		
		We work by induction. Formally,
		we assume that for all $j\in[8]$,
		\begin{equation*}
			\PR_{\param^{*}}(\sigma_{x_j} Z_{x_j} \ge 1-C_{\ref{eqn:Reconstruct}}\delta^2) \ge 1-c_{\ref{eqn:Reconstruct}}\delta,
		\end{equation*}
        where the constants $c_{\ref{eqn:Reconstruct}}$ and $C_{\ref{eqn:Reconstruct}}\delta^2$ are defined in the statement. 
		Our goal is to establish 
        \begin{align}\label{eq:recons_lemma_pf1}
			\PR_{\param^{*}}(\sigma_{u} Z_{u} \ge 1-C_{\ref{eqn:Reconstruct}}\delta^2) \ge 1-c_{\ref{eqn:Reconstruct}}\delta.
		\end{align}
        %We will then show the same holds when $u$ may not have 8 great-grandchildren by a coupling argument. This will be enough to conclude the assertion. 
        %for appropriately chosen constants $c_{\ref{eqn:Reconstruct}}$ and $C_{\ref{eqn:Reconstruct}}$.
		
		We say that $y$ is a \textit{mutation} on level $k$ if $\sigma_y \neq \sigma_{y'}$ for its parent $y'$ on level $k-1$.
		We define the following events
		\begin{align*}
			A_j &:= \{\sigma_{x_j} Z_{x_j}< 1-4C_{\ref{eqn:pHatBounds}}\delta^2\} \quad \textup{for $j\in[8]$},\\
			B_{T} &:=  \{\textup{there is at least 1 mutation on levels 1 and/or 2}\}, \\
			B_L &:= \{ \textup{there is at least 2 mutations on level 3}\},\\
			B_1&:= \{\textup{there is at least 1 mutation one level 3}\} \cap \bigcup_{i=1}^8 A_i, \\
			B_{2}&:= \bigcup_{i< j} (A_i\cap A_j),\\
			E&:= B_T\cup B_L\cup B_1\cup B_2.
		\end{align*}
		We claim that on $E^c$, $\sigma_uZ_u\ge 1-C_{\ref{eqn:Reconstruct}}\delta^2$. To see this, note that in the event $E^c$, at least one of the following three things must hold:
		\begin{enumerate}
			\item there is no mutation on any level, and none of the events $A_i$ occurs; or
			\item there is only 1 mutation, which occurs on the third level, and none of the events $A_i$ occurs; or
			\item there is no mutation on any level and precisely one $A_i$ occurs.
		\end{enumerate}
		In all these cases, $A_{j}^c$ occurs for at least 7 distinct $j$'s. Hence by Claim \ref{lem:Case1} with $C_{\ref{eqn:magWorks2}}=4C_{\ref{eqn:pHatBounds}}$, we have 
        \begin{align}\nonumber
        \sigma_u Z_u \ge 1-C_{\ref{eqn:Reconstruct}} \delta^2 \qquad \textup{for all $\delta\le \delta_{\ref{eqn:3lev1cor}}(c_{\ref{eqn:pHatBounds}}, C_{\ref{eqn:pHatBounds}}, 4C_{\ref{eqn:pHatBounds}})$},
        \end{align}
        noting that the constant $C_{\ref{eqn:Reconstruct}}$ defined in the statement is exactly $C_{\ref{eqn:3lev1cor}}
		= \frac{4}{5} \left(\frac{9 C_{\ref{eqn:pHatBounds}}^2}{c_{\ref{eqn:pHatBounds}}} + 2 C_{\ref{eqn:pHatBounds}}\right)^2
		$ from Claim \ref{lem:Case1}.

		In order to deduce \eqref{eq:recons_lemma_pf1}, it remains to show that 
        \begin{align*}
            \PR_{\param^{*}}(E) \le c_{\ref{eqn:Reconstruct}} \delta,
        \end{align*}
        for $\delta$ small enough. We separately bound the ``bad events.''
\begin{itemize}
        \item First, observe that
		\begin{align*}
			\PR_{\param^{*}}(B_T) &= 1- \PR_{\param^{*}}(B_T^c) \le  1-\left(1-C_{\ref{eqn:pBounds}}\delta\right)^6 \le 6C_{\ref{eqn:pBounds}}\delta,
		\end{align*}
		since $B_T^c$ is the event that no mutation occurs amongst 6 independent possibilities (under the true model). 
		
  \item Next
		\begin{equation*}
			\PR_{\param^{*}}(B_L) \le \binom{8}{2} (2C_{\ref{eqn:pBounds}}\delta)^2 = 112 C_{\ref{eqn:pBounds}}^2\delta^2.
		\end{equation*}
		
  \item Also 
		\begin{align*}
			\PR_{\param^{*}}(B_1) 
			&= \PR_{\param^{*}}(\{\textup{there is at least 1 mutation on level 3}\})
			\times \PR_{\param^{*}}(\cup_j A_j)\\
			&\le \PR_{\param^{*}}(\{\textup{there is at least 1 mutation on level 3}\}) \times 
			8 \max_j\PR_{\param^{*}}(A_j)\\
			%   &\le 8 \max_j \PR_{\param^{*}}(A_j) \PR_{\param^{*}}(\{\textup{there is at least 1 mutation on level 1}\})\\
			&\le 8(2C_{\ref{eqn:pBounds}}\delta)
			\times 8 (c_{\ref{eqn:Reconstruct}} \delta)= 128 c_{\ref{eqn:Reconstruct}}C_{\ref{eqn:pBounds}}\delta^2.
		\end{align*}
		In the first line, we used the fact that $\sigma_{x_j}Z_{x_j}$ is independent of the mutations on level $3$ by Claim \ref{lem:signed_mag} and the Markov property of the CFN model.
		
  \item Similarly, by Claim \ref{lem:signed_mag}, 
		\begin{align*}
			\PR_{\param^{*}}(B_2)\le\binom{8}{2} \max_{i<j}\PR_{\param^{*}}(A_i\cap A_j) = \binom{8}{2} \max_{i<j}\PR_{\param^{*}}(A_i)\PR_{\param^{*}} (A_j) \le 28 c_{\ref{eqn:Reconstruct}}^2 \delta^2.
		\end{align*}
\end{itemize}
Hence a union bound implies %\snote{Are you using an assumption on $\delta$ in the last inequality?}
		\begin{equation*}
			\PR_{\param^{*}}(E) \le 6 C_{\ref{eqn:pBounds}}\delta+\left(112C_{\ref{eqn:pBounds}}^2 + 128 c_{\ref{eqn:Reconstruct}}C_{\ref{eqn:pBounds}}+28 c_{\ref{eqn:Reconstruct}}^2\right)\delta^2  
			%\le 6C_{\ref{eqn:pBounds}}\delta+2380 C_{\ref{eqn:pBounds}}^2\delta^2 
			\le 7 C_{\ref{eqn:pBounds}}\delta,
		\end{equation*}
        where the last inequality holds whenever (recall that $
        c_{\ref{eqn:Reconstruct}}
		:= 7 C_{\ref{eqn:pBounds}}$)
        \begin{align}\nonumber 
        \delta \le \frac{1}{112C_{\ref{eqn:pBounds}} + 128\times 7 C_{\ref{eqn:pBounds}}+28 \times 7^{2}C_{\ref{eqn:pBounds}}}  = \frac{1}{2380C_{\ref{eqn:pBounds}}}.
        \end{align}

		%for $\delta$ small enough depending on $c_{\ref{eqn:Reconstruct}}, C_{\ref{eqn:pBounds}}$. Here we used the Assumption \ref{assumption1} and, more precisely, the bound $\delta \le \frac{1}{2380C_{\ref{eqn:pBounds}}}$ in \eqref{eqn:constantAssumption1}. 
		%Hence we take
		%$$
		%c_{\ref{eqn:Reconstruct}}:= 7 C_{\ref{eqn:pBounds}}.
		%$$
		
		There is one last matter 
		to attend to. We have assumed that $u$ has $8$ great-grandchildren. 
		The rest of the cases are established using a coupling argument. We provide details on this coupling in the case that $v_1$ is a leaf, but $v_2$ still has 4 grandchildren. %, and leave the remaining cases to the reader.
		That is, we consider the tree $\tilde{T}_u$ defined as
		\begin{equation*}
			\tilde{T}_u = \text{{\qroofx=1
					\qroofy=1
					\Tree [.$u$ [.$v_1$ ] [.$v_2$ [.$w_3$ \qroof{$\qquad$}.$x_5$ \qroof{$\qquad$}.$x_6$ ] [.$w_4$ \qroof{$\qquad$}.$x_7$ \qroof{$\qquad$}.$x_8$ ] ] ]}}
		\end{equation*} Also let $T_u'$ denote the tree depicted 
		\begin{equation*}
			T_u' = \text{{\qroofx=1
					\qroofy=1
					\Tree [.$u$ [.$v_1$ [.$w_1$ $x_1$  $x_2$ ] [.$w_2$ $x_3$ $x_4$ ] ] [.$v_2$ [.$w_3$ \qroof{$\qquad$}.$x_5$ \qroof{$\qquad$}.$x_6$ ] [.$w_4$ \qroof{$\qquad$}.$x_7$ \qroof{$\qquad$}.$x_8$ ] ] ]}}
		\end{equation*} 
		where $x_1,\dotsm, x_4$ are leaves and the corresponding edges all satisfy \eqref{eqn:pBounds}.
		We denote by $\tilde{Z}$ and $Z'$ the magnetizations on $\tilde{T}_u$ and $T'_u$ respectively. 
		Since 
		\begin{equation*}
			\sigma_{v_1}\tilde{Z}_{v_1} = 1\ge \sigma_{v_1} Z_{v_1}'
		\end{equation*} and $a\mapsto q(a,b)$ is monotonic in $a$, we get the following stochastic domination
		\begin{equation*}
			\sigma_u\tilde{Z}_u \ge_{\textup{st}} \sigma_uZ_u'.
		\end{equation*} 
		If $\tilde{\param},\widetilde{\hparam}$ are the parameters on $\tilde{T}_u$ with ``extensions'' $\param'$, $\hparam'$ on $T'_u$, which each satisfy the corresponding hypotheses \eqref{eqn:pBounds}, \eqref{eqn:pHatBounds}, 
		then we get the implication
		\begin{equation*}
			\PR_{\param'}\left(\sigma_{x_j}Z_{x_j}'\ge 1-C_{\ref{eqn:Reconstruct}}\delta^2\right)\ge 1-c_{\ref{eqn:Reconstruct}}\delta, \, \forall j\in[8] \quad\textup{implies}\quad \PR_{\tilde\param}(\sigma_u Z_u \ge 1-C_{\ref{eqn:Reconstruct}}\delta^2) \ge 1-c_{\ref{eqn:Reconstruct}}\delta.
		\end{equation*}
		Similar coupling arguments hold for the remaining cases.
	\end{proof}

	Next, we turn our attention to proving the lower tail bound in Theorem \ref{thm:Robust} \textbf{(ii)}. 
	Let us look back at the tree $T_u$ in \eqref{Tree:T_u} and the proof of Theorem \ref{thm:Robust} \textbf{(i)}. We can see that if exactly one of the great-grandchildren of $u$ is corrupted by either (a) having the signal $\sigma_u\neq \sigma_{x_j}$ or (b) failing reconstruction in the sense that $\sigma_{x_j} Z_{x_j}\le 1-C_{\ref{eqn:Reconstruct}}\delta^2$, then the magnetization at $u$ is $Z_u\approx \sigma_u$ up to a perturbation by at most $c_{\ref{eqn:Reconstruct}}\delta^2$. The chance that exactly one great-grandchild is corrupted in the prior sense is order $\Theta(\delta)$.
	
	What might result in the magnetization $Z_u\approx -\sigma_u$? The obvious example is where $\sigma_{v_1} = \sigma_{v_2} = -\sigma_u$. This event has probability $\Theta(\delta^2)$, so this suggests that $Z_u\approx -\sigma_u$ may have probability $o(\delta)$. As the proof of Theorem \ref{thm:Robust} \textbf{(ii)} below shows, that is essentially the only way this happens. %Lemma \ref{lem:anti} shows that this probability is preciesely $\Theta(\delta^2)$. 
	In its proof we will use Claim \ref{claim:strongopposites.}. 
 
%	We now prove the lower tail magnetization bound in Theorem \ref{thm:Robust} \textbf{(ii)}. 
	
	\begin{proof}[\textbf{Proof of Theorem} \ref{thm:Robust} \textup{\textbf{(ii)}}]
		Recall that the statement in (ii) is that $\PR_{\param^{*}} \left(\sigma_u Z_u^{\hparam}(\sigma_{L_{u}}) \le - c_{\ref{eqn:antiReconstruction}}  \right) \le C_{\ref{eqn:antiReconstruction}}\delta^2$ for $\delta\le \delta_{\ref{eqn:antiReconstruction}}$.
		Let us denote the two children of the vertex $u$ by $x$ and $y$. Define the events
		\begin{align*}
			A_x &= \{\sigma_u = \sigma_x\} & A_y&=\{\sigma_u= \sigma_y\}\\
			B_x &= \{\sigma_{x} Z_x \ge 1-C_{\ref{eqn:Reconstruct}} \delta^2\} & B_y &= \{\sigma_y Z_y \ge 1-C_{\ref{eqn:Reconstruct}}\delta^2\},
		\end{align*}
		and observe that these are independent events. Let $E$ be the event that \emph{at least three} of the events $A_x,A_y,B_x,B_y$ occur.
		We claim that we can find a constant $c_{\ref{eqn:antiReconstruction}} \in (0,1)$ such that
		\begin{equation}\label{eqn:eventE} E\subseteq \{\sigma_u Z_u \ge -c_{\ref{eqn:antiReconstruction}}\}.
		\end{equation}
		This will imply the claim. Indeed 
		a union bound gives
		\begin{align*}
			\PR(E^c) &\le P(A_x^c\cap A_y^c) + P(A_x^c\cap B_x^c)+ P(A_x^c\cap B^c_y)\\ &\qquad + P(A_y^c\cap B^c_x)+ P(A^c_y\cap B^c_y) + P(B_x^c\cap B^c_y)\\
			&\le \left(C_{\ref{eqn:pBounds}}^2 + 4 C_{\ref{eqn:pBounds}} c_{\ref{eqn:Reconstruct}}+ c_{\ref{eqn:Reconstruct}}^2\right) \delta^2  = 78 C_{\ref{eqn:pBounds}}^2 \delta^2
		\end{align*}
		where we obtained the second inequality by noting that %all the events are independent, that 
		Assumption \ref{assumption1} implies $\PR(A^c_x), \PR(A^c_y)\le C_{\ref{eqn:pBounds}}\delta$ and that Theorem \ref{thm:Robust} \textbf{(i)} implies $\PR(B^c_x), \PR(B_y^c)\le c_{\ref{eqn:Reconstruct}}\delta$ for $\delta$ small enough.
		
		It remains to prove \eqref{eqn:eventE}. We focus on the event $\{\sigma_u=1\}$, the other case being similar. 
		On $\{\sigma_u=1\} \cap A_x\cap A_y\cap B_x$ we have by \eqref{eq:hat_theta_mag_lower_bd}, 
  \begin{equation*}\htheta_xZ_x \ge (1-2C_{\ref{eqn:pHatBounds}}\delta)(1-C_{\ref{eqn:Reconstruct}}\delta^2) \ge 1-3C_{\ref{eqn:pHatBounds}}\delta \qquad \textup{for all $\delta \le \delta_{\ref{eq:hat_theta_mag_lower_bd}}(C_{\ref{eqn:pHatBounds}},C_{\ref{eqn:Reconstruct}})$}.
		\end{equation*}
		%for $\delta$ small enough depending on $C_{\ref{eqn:pHatBounds}}, C_{\ref{eqn:Reconstruct}}$.
		Moreover, %\snote{Explain.}
		as $Z_y\in[-1,1]$ a.s. and $\htheta_y\le 1-2c_{\ref{eqn:pHatBounds}}\delta$, we have also
		\begin{equation*}
			\htheta_yZ_y \ge -1+2c_{\ref{eqn:pHatBounds}}\delta.
		\end{equation*} 
		It can be checked that the same bounds hold on the event $\{\sigma_u=1\} \cap A_x\cap B_x\cap B_y$. 
		Therefore, by coordinatewise monotonicity of $q$, we get
		%\snote{The last inequality needs more details.}
        %\HL{Use Claim 5.2 to simplify and make the computation here explicit}
		\begin{align*}
			Z_u = q(\htheta_x Z_x, \htheta_y Z_y) 
			\ge q(1-3C_{\ref{eqn:pHatBounds}}\delta, -1+2c_{\ref{eqn:pHatBounds}}\delta)
			&\overset{(a)}{\ge} -1 + \frac{2 c_{\ref{eqn:pHatBounds}}}{3C_{\ref{eqn:pHatBounds}}} =: -c_{\ref{eqn:antiReconstruction}},
   %\frac{(2c_{\ref{eqn:pHatBounds}}- 3C_{\ref{eqn:pHatBounds}})\delta}{(2c_{\ref{eqn:pHatBounds}}+ 3C_{\ref{eqn:pHatBounds}})\delta - 6 c_{\ref{eqn:pHatBounds}}C_{\ref{eqn:pHatBounds}}\delta^2} \\
			%&= \frac{2c_{\ref{eqn:pHatBounds}} - 3C_{\ref{eqn:pHatBounds}}}{2c_{\ref{eqn:pHatBounds}}+3 C_{\ref{eqn:pHatBounds}} - 6 c_{\ref{eqn:pHatBounds}}C_{\ref{eqn:pHatBounds}}\delta}\\
			%    \ge \frac{c_{\ref{eqn:pHatBounds}} - 2C_{\ref{eqn:pHatBounds}}}{2 C_{\ref{eqn:pHatBounds}}},
			%&= \frac{2c_{\ref{eqn:pHatBounds}} - 3C_{\ref{eqn:pHatBounds}}}{2c_{\ref{eqn:pHatBounds}}+3 C_{\ref{eqn:pHatBounds}}}
		\end{align*}
        where (a) uses Claim \ref{claim:strongopposites.} with $A=3C_{\ref{eqn:pHatBounds}}$ and $a=2 c_{\ref{eqn:pHatBounds}}$.  
  
		%\DC{this may just be a bound of $\frac{c_{\ref{eqn:pHatBounds}} - 2C_{\ref{eqn:pHatBounds}}}{2 C_{\ref{eqn:pHatBounds}}}$}.
		%The first term on the last line is in $(-1,0)$ by our assumptions. 
		%By reversing the roles of $x$ and $y$, we see that 
		%the same inequality holds in the
		%other cases within $\{\sigma_u = 1\}\cap E$.
		%$\{\sigma_u = 1\}\cap E\subseteq \{\sigma_uZ_u \ge -c_{\ref{eqn:antiReconstruction}}\}$. 
		%Set
		%$$
		%c_{\ref{eqn:antiReconstruction}}
		%:= \frac{1}{2}\left(
		%\frac{3C_{\ref{eqn:pHatBounds}}
		%	- 2c_{\ref{eqn:pHatBounds}}}{3 C_{\ref{eqn:pHatBounds}} + 2c_{\ref{eqn:pHatBounds}}}\right)
		%+ 
		%\frac{1}{2}(1)
		%\in (0,1).
		%$$
		%We have shown that, for $\delta$
		%small enough depending on $c_{\ref{eqn:pHatBounds}}, C_{\ref{eqn:pHatBounds}}, C_{\ref{eqn:Reconstruct}}$,
		%the implication \eqref{eqn:eventE}
		%holds on $\{\sigma_u = 1\}$.
		
		A similar argument with upper bounds instead of lower bounds gives $\{\sigma_u=-1\}\cap E\subseteq \{\sigma_u Z_u \ge -c_{\ref{eqn:antiReconstruction}}\}$. 
		That concludes the proof.
	\end{proof}

	\section{Other proofs} %of Lemma \ref{lem:derivative}, Theorem \ref{thm:gradient}, and Corollary \ref{cor:coord_ini}}
\label{sec:keyLemmas}

In this section, we prove Theorem \ref{thm:gradient}. %subject to Theorem \ref{thm:Robust}. 
We begin with the formula for the gradient in Lemma \ref{lem:derivative}. Throughout this section, we have some edge $e = \{x,y\}$ in the tree $T$. We let $Z_x$ (resp.~$Z_y$) be the magnetization on the descendant subtree $T_x$ (resp.~$T_y$) with respect to $y$ (resp.~$x$).

%\subsection{Proof of Lemma \ref{lem:derivative}} 
%Our first step to prove Theorem \ref{thm:gradient}, is to prove Lemma \ref{lem:derivative}.

\begin{proof}[\textbf{Proof of Lemma \ref{lem:derivative}}]
		Let $T$ be a binary tree and $\hparam = (\htheta_e;e\in E(T))$ denote a collection of edge parameters. %It is easily checked that $\PR_{\param^{*}}$, t
  The law of a random state vector $\sigma = (\sigma_u; u\in T)$ on \textit{all} vertices of $T$ satisfies that for each $\tau\in \{\pm 1\}^{V(T)}$, 
		\begin{equation*}
			\PR_\hparam (\sigma = \tau) = \frac{1}{2} \prod_{e = \{u,v\} \in E(T)} \frac{1+\tau_u \tau_v \htheta_e}{2}.
		\end{equation*} 
		Marginalizing over the leaves
		and breaking things up according to %whether there is a flip along edge $e = \{u,v\}$, for a state vector
		%\snote{I don't like the terminology signal. It is not standard in this literature.} 
		the spins at nodes $u$ and $v$, for a spin configuration $\tau\in \{\pm 1\}^{V(T)}$, 
		\begin{align*}
			\PR_\hparam(X_L = \tau_L) &= \PR(X_{L_u} = \tau_{L_u}, \sigma_{L_v} = \tau_{L_v})\\
			&=   \sum_{\tau'_u,\tau'_v\in\{-,+\} }\PR_\hparam(\sigma_{L_u} = \tau_{L_u}, \sigma_{L_v} = \tau_{L_v}| \sigma_u = \tau'_u, \sigma_v=\tau_v' )\, \PR_\hparam(\sigma_u = \tau'_u,\sigma_v = \tau'_v)\\
			&=\frac{1}{2} \sum_{\tau'_u,\tau'_v\in\{-,+\} }\PR_\hparam(\sigma_{L_u} = \tau_{L_u}, \sigma_{L_v} = \tau_{L_v}| \sigma_u = \tau'_u, \sigma_v=\tau_v' )\frac{1+\tau_u'\tau_v'\htheta_e}{2}.
		\end{align*} 
		By the Markov property and using 
		Bayes' rule to bring out the magnetization, 
		%\snote{The last line below comes too fast.}\DC{added a few more lines}
		\begin{align*}
			\PR_\hparam(\sigma_L = \tau_L) 
			&=\frac{1}{2} \sum_{\tau'_u,\tau'_v\in\{-,+\} }\PR_\hparam(\sigma_{L_u} = \tau_{L_u}|\sigma_u = \tau'_u) \, \PR_{\hparam}( \sigma_{L_v} = \tau_{L_v}|  \sigma_v=\tau_v' )\frac{1+\tau_u'\tau_v'\htheta_e}{2}\\
			&= \frac{1}{2} \sum_{\tau'_u,\tau'_v\in\{-,+\} } \bigg\{
			\frac{\PR_\hparam(\sigma_u = \tau_u'|\sigma_{L_u} = \tau_{L_u})\PR_\hparam(\sigma_{L_u} = \tau_{L_u})}{\PR_\hparam(\sigma_u = \tau_u')}\\
			&\quad\qquad \qquad \qquad \qquad \times \frac{\PR_{\hparam}(\sigma_v = \tau_v'| \sigma_{L_v} = \tau_{L_v} )\PR_{\hparam}(\sigma_{L_v} = \tau_{L_v})}{\PR_{\hparam}(\sigma_v = \tau_v')}\frac{1+\tau_u'\tau_v'\htheta_e}{2}\bigg\}.
\end{align*}
Since $\PR_{\hparam}(\sigma_x = \tau_x) = \frac12$ for all $x\in T$, we have
\begin{align*}
			\PR_{\hparam}&(\sigma_L = \tau_L)= \PR_\hparam(\sigma_{L_u} = \tau_{L_u})\, \PR_\hparam(\sigma_{L_v} = \tau_{L_v})\\
			&\qquad\qquad \times\sum_{\tau_u',\tau_v'\in\{-,+\}} \PR_\hparam(\sigma_u = \tau_u'|\sigma_{L_u} = \tau_{L_u})\PR_\hparam(\sigma_v = \tau_v' | \sigma_{L_v} = \tau_{L_v}) (1+\tau_u'\tau_v'\htheta_e)\\
			&= \PR_\hparam(\sigma_{L_u} = \tau_{L_u})\PR_\hparam(\sigma_{L_v} = \tau_{L_v})\Bigg(1 + \htheta_e\sum_{\tau_u',\tau_v'\in\{-,+\}} \PR_\hparam(\sigma_u = \tau_u'|\sigma_{L_u} = \tau_{L_u})\PR_\hparam(\sigma_v = \tau_v' | \sigma_{L_v} = \tau_{L_v}) \tau_u'\tau_v' \Bigg)\\
			&=  \PR_{\hparam}(\sigma_{L_u}=\tau_{L_u})\PR_\hparam(\sigma_{L_v} = \tau_{L_v}) (1+\htheta_e Z_u Z_v ).
		\end{align*} 
		In the last two lines we used the fact that $\PR_\hparam(\sigma_u = \tau_u'|\sigma_{L_u} = \tau_{L_u})\PR_\hparam(\sigma_v = \tau_v' | \sigma_{L_v} = \tau_{L_v})$ is a product probability measure on $\{(\tau_u', \tau_v'): \tau_u',\tau_v'\in\{-,+\}\}$ and, under this measure, $\E[\tau_u'] = Z_u$ and $\E[\tau_v'] = Z_v$.
		
		It follows that 
		\begin{equation*}
			\ell(\hparam; \tau_L)  = \log \PR_{\hparam}(\sigma_{L}=\tau_{L})=  \log (1+Z_uZ_v\htheta_e) + \log \PR_\hparam(\sigma_{L_u} = \tau_{L_u})+\log \PR_\hparam(\sigma_{L_v} = \tau_{L_v}).
		\end{equation*} 
		Observing that neither $\PR_\hparam(\sigma_{L_u} = \tau_{L_u})$
		nor
		$\PR_\hparam(\sigma_{L_v} = \tau_{L_v})$ depend on $\htheta_e$ and hence have vanishing derivatives,
		one finally obtains \eqref{eqn:derivative}.
\end{proof}

%\subsection{Proof of Theorem \ref{thm:gradient}}

% We now prove Theorem \ref{thm:gradient}. 

\begin{proof}[\textbf{Proof of Theorem} \ref{thm:gradient}] %\snote{This proof needs more details and more narration as to what is going on.} 
		Recall that by the computation in \eqref{eqn:grad00}, equation \eqref{eqn:gradient} is equivalent to
\begin{align}\label{eqn:grad_thm_claim1}
			\left| \E_{\param^{*}}\left[\frac{Z_x Z_y}{1+\htheta_e Z_xZ_y}\right]-\E_{\param^{*}}\left[\frac{\sigma_x \sigma_y}{1+\htheta_e\sigma_x\sigma_y}\right]\right|\le C_{\ref{eqn:gradient}}\delta.
		\end{align}		
  While the magnetizations $Z_{x}$ and $Z_{y}$ are likely to be close to the spins $\sigma_{x}$ and $\sigma_{y}$ respectively, there can be a significant discrepancy with small probability and we need to quantify how this discrepancy affects the first expectation in \eqref{eqn:grad_thm_claim1}. %of the gradient at \eqref{eqn:derivative}. 
  
  To do so, in view of the reconstruction lemma (Theorem \ref{thm:Robust}), we divide up the magnetizations into three cases following \eqref{eq:recons_tiers}. %due to  and the independence properties of unsigned magnetizations in Claim \ref{lem:signed_mag}. 
		Define the events 
		\begin{align}\label{eq:events_grad_thm_pf}
			\begin{cases}
				\quad 	F&:= \{ \textup{flip on the edge $e=\{x,y\}$} \}= \{\sigma_x\neq \sigma_y\} \\
				\quad	A&:= \{ \textup{good reconstruction at both $x$ and $y$}\} \\
				\quad	M&:= \{ \textup{one moderate failure and one good reconstruction among $x$ and $y$} \}\\
				\quad	E&:= A \cup (F^c\cap M). 
			\end{cases}
		\end{align}
		Note that the events $A$, $F^c\cap M$, and $E^{c}$ partition the sample space. 
		In order to show \eqref{eqn:grad_thm_claim1}, we will compute the expectations on the ``good'' event $A$ and the ``bad'' event $A^{c}=E^{c}\sqcup (F^{c}\cap M)$.
  \begin{comment}
		is at most $O(\delta)$:
		\begin{align}\label{eqn:gradOnA}
			\bigg|\E_{\param^{*}} &\left[\frac{\partial}{\partial \htheta_e} \ell(\hparam;\sigma) \right]- \frac{\theta^*_e-\htheta_e}{(1+\htheta_e)(1-\theta^*_e)} \bigg|\le \left| \E_{\param^{*}} \left[\frac{Z_xZ_y}{1+\htheta_e Z_xZ_y} \mathbf{1}_{A}\right] -  \E_{\param^{*}} \left[\frac{\sigma_x\sigma_y}{1+\htheta_e \sigma_x\sigma_y} \mathbf{1}_{A}\right]\right| + O(\delta).
		\end{align}
		We will then show that the error on the good event $A$ is also of $O(\delta)$. 
  \end{comment}

        \medskip\noindent\textbf{Good reconstructions} We start with the event $A$ where we can successfully recover the spins $\sigma_x,\sigma_y$. We will show 
		\begin{align}\label{eqn:grad_thm_pf2}
			\left| \frac{Z_xZ_y}{1+\htheta_eZ_xZ_y}- \frac{\sigma_x\sigma_y}{1+\sigma_x\sigma_y \htheta_e} \right|\mathbf{1}_A \le O(\delta^2) \mathbf{1}_{F^c} + O(1) \mathbf{1}_F. 
		\end{align} 
		Since $\PR_{\param^{*}}(F)\le C_{\ref{eqn:pBounds}}\delta$ by \ref{assumption1}, this will yield that the expectation under $\E_{\param^{*}}$ of the left-hand side above is $O(\delta)$. 
		
		To justify \eqref{eqn:grad_thm_pf2} we break up $A$ into two cases according to the flip event $F$. Namely, on $F^c$ we have $\sigma_x\sigma_y = 1$ and on $F$ we have $\sigma_x\sigma_y = -1$. Hence, the left-hand side of \eqref{eqn:grad_thm_pf2} is
  \begin{align}
      \nonumber &\left| \frac{Z_xZ_y}{1+\htheta_eZ_xZ_y}- \frac{\sigma_x\sigma_y}{1+\sigma_x\sigma_y \htheta_e} \right|\mathbf{1}_A = \left| \frac{Z_xZ_y}{1+\htheta_eZ_xZ_y}- \frac{1}{1+ \htheta_e} \right|\mathbf{1}_{A\cap F^c} + \left| \frac{Z_xZ_y}{1+\htheta_eZ_xZ_y}- \frac{-1}{1-\htheta_e} \right|\mathbf{1}_{A\cap F}\\
      &\qquad\qquad = \left|\frac{1-Z_xZ_y}{(1+\htheta_e)(1+\htheta_e Z_xZ_y)}\right| \mathbf{1}_{A\cap F^c} + \left|\frac{1+Z_xZ_y}{(1-\htheta_e)(1-\htheta_e+\htheta_e(1+Z_xZ_y))}\right|\mathbf{1}_{A\cap F}\label{eqn:grad_helper_10.9_1}
  \end{align}

  The key insight is that both numerators in \eqref{eqn:grad_helper_10.9_1} can be controlled by the approximation $Z_{x}Z_{y}\approx \sigma_{x}\sigma_{y}$ on $A$. More quantitatively, noting that $|\sigma_{x}\sigma_{y}|=1$, 
		\begin{align}\label{eqn:grad5}
			\left|\sigma_x \sigma_y - Z_xZ_y \right|\mathbf{1}_A  =	\left|\sigma_x \sigma_y - (\sigma_{x}\sigma_{y})^{2}Z_xZ_y \right|\mathbf{1}_A  = \left| 1 - (\sigma_xZ_x)(\sigma_xZ_y)  \right| \mathbf{1}_{A}= O(\delta^2), 
		\end{align}
		where the last inequality follows from the definition of the event $A$. \begin{comment}Now if  $F^{c}$ occurs so that $\sigma_{x}\sigma_{y}=1$, then  
		\begin{align*}
			\left| \frac{Z_xZ_y}{1+\htheta_eZ_xZ_y}- \frac{1}{1+\htheta_e} \right|\mathbf{1}_{A\cap F^{c}} &= \left| \frac{1-Z_{x}Z_{y}}{(1+ \htheta_e)\left( 1+ \htheta_e  Z_{x}Z_{y}\right)}\right|\mathbf{1}_{A\cap F^{c}} =O(\delta^2) \mathbf{1}_{F^{c}}.
		\end{align*}
  \end{comment}
		
  For the $F^c$ term in \eqref{eqn:grad_helper_10.9_1}, we lower bound the denominator using $\hat{\theta}_{e}\ge 0$  (see \ref{assumption1}) and  $Z_{x}Z_{y}\ge 1- 8 C_{\ref{eqn:pHatBounds}}\delta^2\ge 0$ on the event $A$.  Hence
  \begin{align*}
			\left| \frac{1-Z_{x}Z_{y}}{(1+ \htheta_e)\left( 1+ \htheta_e  Z_{x}Z_{y}\right)}\right|\mathbf{1}_{A\cap F^{c}} =O(\delta^2) \mathbf{1}_{F^{c}}.
		\end{align*}
  
  Similarly, on $A\cap F$, we have $1+Z_xZ_y\ge 0$ and $1-\htheta_e \ge 2c_{\ref{eqn:pHatBounds}}\delta$ from \ref{assumption1} which allow us to control the denominator in the corresponding term in \eqref{eqn:grad_helper_10.9_1}. Namely,
		\begin{align*}
			 \left| \frac{1+Z_{x}Z_{y}}{(1-\htheta_e)\left( 1 -\htheta_e  +\htheta_{e}(1+Z_{x}Z_{y})\right)}\right|\mathbf{1}_{A\cap F} \le \frac{|1+Z_xZ_y|}{(1-\htheta_e)^2} \mathbf{1}_{A\cap F} \le \frac{O(\delta^2)}{(2c_{\ref{eqn:pHatBounds}}\delta)^{2}} \mathbf{1}_{F} = O(1) \mathbf{1}_{F} .
		\end{align*} The last two displayed equations imply \eqref{eqn:grad_thm_pf2}.
%		where the last inequality uses $1-\htheta_{e}=2\hat{p}_{e}\ge 2c_{\ref{eqn:pHatBounds}}\delta$ from \ref{assumption1} and $\htheta_e Z_{x}Z_{y}\ge -1$ almost surely. This verifies \eqref{eqn:grad_thm_pf2}, as desired. 
  
		\medskip\noindent\textbf{Moderate failure, but no flip} %Now we need to control what happens on the ``bad'' event $A^c = E^c\sqcup (F^c\cap M)$. %We will show that the error on each event $F^{c}\cap M$ and $E^{c}$ are at most $O(\delta)$. 
		We move to the event $F^c\cap M$, where $\sigma_x = \sigma_y$ (the event $F^c$) but there is a moderate failure at one of the vertices (the event $M$). On this event, we have 
		\begin{align*}
			1+\htheta_e Z_xZ_y \ge 1 + (1-C_{\ref{eqn:Reconstruct}} \delta^2)(-c_{\ref{eqn:antiReconstruction}})  \ge 1-c_{\ref{eqn:antiReconstruction}}
		\end{align*}
		and also %provided $\delta\le \frac{1+c_{\ref{eqn:antiReconstruction}}}{2C_{\ref{eqn:pBounds}}}$, 
		\begin{align*}
			1+\htheta_e \sigma_x \sigma_y  = 1+\hat{\theta}_{e} \ge 2-2C_{\ref{eqn:pBounds}}\delta.%\ge 1-c_{\ref{eqn:antiReconstruction}}. 
		\end{align*} 
		By Theorem \ref{thm:Robust} and a union bound, $\PR_{\param^{*}}(M)\le 2c_{\ref{eqn:Reconstruct}} \delta = O(\delta)$. It follows that 
		\begin{align}\label{eqn:grad3}
		\left|\E_{\param^{*}}\left[\frac{Z_xZ_y}{1+\htheta_eZ_xZ_y} \mathbf{1}_{F^c\cap M}\right]\right| ,\, 	\left|\E_{\param^{*}}\left[\frac{\sigma_x \sigma_y}{1+\htheta_e \sigma_x \sigma_y} \mathbf{1}_{F^c\cap M}\right]\right| & \le \frac{1}{(1-c_{\ref{eqn:antiReconstruction}})\land (2-2C_{\ref{eqn:pBounds}})} \PR(M) 
 = O(\delta).%		 \le \underbrace{\frac{c_{\ref{eqn:Reconstruct}}}{(1-c_{\ref{eqn:antiReconstruction}})\land (1-C_{\ref{eqn:pBounds}})}}_{=:C_{\ref{eqn:grad3}}} \delta.
		\end{align}
		
		\medskip\noindent\textbf{Severe failure or moderate failure and flip} Lastly, consider the event $E^{c} = A^c \cap (F\cup M^c)$. Recall that by Claim \ref{lem:signed_mag}, the unsigned magnetizations $\sigma_{x}Z_{x}$ and $\sigma_{y}Z_{y}$, and $\mathbf{1}(\sigma_{x}\ne \sigma_{y})$ are all independent. Note that the event $A^{c}\cap F$ means at least one of $x$ and $y$ does not have good reconstruction and there is flip on $e$, which occurs with probability $\le (2C_{\ref{eqn:Reconstruct}} \delta)p_{e}\le (2C_{\ref{eqn:Reconstruct}} \delta)C_{\ref{eqn:pBounds}}\delta$ (by Theorem \ref{thm:Robust}, independence, and \ref{assumption1}). Furthermore, the event $A^{c}\cap M^{c}$ is that there are at least one severe failure or two moderate failures (recall the terminologies in  \eqref{eq:recons_tiers}) among $x$ and $y$. The former occurs with probability $\le 2C_{\ref{eqn:antiReconstruction}}\delta^{2}$ (by a union bound and Theorem \ref{thm:Robust}) and the latter occurs with probability $\le c_{\ref{eqn:Reconstruct}}^{2}\delta^{2}$ (by Theorem \ref{thm:Robust} and the independence of the unsigned magnetizations). It follows that 
		\begin{align} 
		\nonumber	\PR_{\param^{*}}( E^{c}) &= \PR_{\param^{*}}(A^{c}\cap (F\cup M^{c}) )\\
  &\nonumber \le  \PR_{\param^{*}}(A^{c}\cap F)+\PR_{\param^{*}}(A\cap  M^{c}) \\
  &\le 2C_{\ref{eqn:Reconstruct}} C_{\ref{eqn:pBounds}}\delta^2 +   {(2C_{\ref{eqn:antiReconstruction}}+c_{\ref{eqn:Reconstruct}}^{2})}\delta^{2} = O(\delta^2).\label{eqn:grad2}
		\end{align} 
		Recall the generic bound
		%\DC{corrected typo, originally it read $1/(1-2c_{\ref{eqn:pHatBounds}\delta}$}
		\begin{equation}\label{eqn:grad1}
			\left| \frac{Z_xZ_y}{1+\htheta_e Z_xZ_y} \right|, \left|\frac{\sigma_x\sigma_y}{1+\sigma_x\sigma_y \htheta_e} \right| \le \frac{1}{2c_{\ref{eqn:pHatBounds}}\delta},
		\end{equation}
		which follows since $|Z_xZ_y|\in[-1,1]$ and $\sigma_x\sigma_y \in \{-1,1\}$ and $\htheta_e\le 1-2c_{\ref{eqn:pHatBounds}}\delta$ under Assumption \ref{assumption1}. Hence combining \eqref{eqn:grad2} and \eqref{eqn:grad1}, we get 
		\begin{align}\label{eqn:grad11}
			\left| 	\E_{\param^{*}}\left[\frac{Z_xZ_y}{1+\htheta_e Z_xZ_y}\mathbf{1}_{E^{c}}\right]\right|,\, \left| 	\E_{\param^{*}}\left[\frac{\sigma_x \sigma_y}{1+\htheta_e \sigma_x \sigma_y}\mathbf{1}_{E^{c}}\right]\right| \le O(\delta^{-1})\PR_{\param^{*}}(E^c) = O(\delta^{-1})\times O(\delta^2) = O(\delta).
		\end{align}	
		Now since the events $A$, $F^c\cap M$, and $E^{c}$ partition the sample space, using \eqref{eqn:grad3}, \eqref{eqn:grad1}, \eqref{eqn:grad11}, with the triangle inequality, we deduce 
        \begin{align*}
            &\left| \E_{\param^{*}}\left[\frac{Z_x Z_y}{1+\htheta_e Z_xZ_y}\right]-\E_{\param^{*}}\left[\frac{\sigma_x \sigma_y}{1+\htheta_e\sigma_x\sigma_y}\right]\right|\\
            &\le\left| \E_{\param^{*}}\left[\frac{Z_x Z_y}{1+\htheta_e Z_xZ_y}\mathbf{1}_A\right]-\E_{\param^{*}}\left[\frac{\sigma_x \sigma_y}{1+\htheta_e\sigma_x\sigma_y}\mathbf{1}_A\right]\right| + \left| \E_{\param^{*}}\left[\frac{Z_x Z_y}{1+\htheta_e Z_xZ_y}\mathbf{1}_{A^c}\right]\right|+\left|\E_{\param^{*}}\left[\frac{\sigma_x \sigma_y}{1+\htheta_e\sigma_x\sigma_y}\mathbf{1}_{A^c}\right]\right|\\
            &= O(\delta)+ O(\delta)+O(\delta) = O(\delta).
        \end{align*}
        This is precisely \eqref{eqn:grad_thm_claim1}.
	\end{proof}

	Lastly in this section, we derive Corollary \ref{cor:coord_ini} from Theorem \ref{thm:gradient}.
	
	\begin{proof}[\textbf{Proof of Corollary \ref{cor:coord_ini}}]
		%\HL{David: Can you get an exact constant in the $O(\cdot)$ in this proof? The proof will be moved to later sections.} 
		
		By Theorem \ref{thm:gradient}, there is some small signed error $\eps\in[-C_{\ref{eqn:gradient}}\delta,C_{\ref{eqn:gradient}}\delta]$ such that 
		\begin{equation*}
			\frac{\partial}{\partial\htheta_e} \E_\param\left[\ell(\hparam;\sigma)\right] = \frac{\theta_e-\htheta_e}{(1-\htheta_e^2)} + \eps.
		\end{equation*} 
        Hence solving $\frac{\partial}{\partial\htheta_e} \E_\param\left[\ell(\hparam;\sigma)\right] =0$ amounts to solving the following equation in $\htheta_e$
		\begin{equation*}
			\eps\htheta_e^2 +\htheta_e - (\theta_e + \eps)=0.
		\end{equation*}
		The relevant root is
		\begin{align*}
			\htheta_e^* = \frac{-1+\sqrt{1+4\eps(\theta_e+\eps)}}{2\eps} = \frac{1}{2\eps}\left(-1+ 1+\frac{4\eps(\theta_e+\eps)}{2}- \frac{16\eps^2(\theta_e+\eps)^2}{8} +\eta\right),
		\end{align*} where $|\eta|\le \frac{1}{16} |4\eps(\theta_e+\eps)|^3 \le 32\eps^3$. The bound on $\eta$ follows from a Taylor expansion and the fact that $\theta+\eps\le 2$. %$\eps(\theta_e+\eps) = O(\eps)$. 
		%After some algebra, and 
  Recalling that $\theta_e = 1-2p_e$, %$p_{e}\le C_{\ref{eqn:pBounds}}\delta$, and $|\eps|\le C_{\ref{eqn:gradient}} \delta$, 
  we have 
		\begin{align*}
			\htheta_e^* &= \frac{1}{2\eps} \left( 2\eps(\theta_e + \eps) - 2\eps^2(1-2p_e+\eps)^2\right)+\frac{\eta}{\eps}\\
			%&=\theta_e + \eps - \eps (1-2p_e +\eps)^2 +\frac{\eta}{\eps}\\
			&= \theta_e  + 4p_e\eps + 4p_e\eps^2- 2\eps^2 +\eps^3 + 4\eps p_e^2 + \frac{\eta}{\eps}.
		\end{align*}
		Therefore, %\snote{Below, what happened to the $-2\eps^2$? And why are you taking $\sqrt{ p_{e} \eta }$ instead of just applying the bounds on $p_e$ and $\eps$?}
        using the bounds $p_{e}\le C_{\ref{eqn:pBounds}}\delta$, $|\eps|\le C_{\ref{eqn:gradient}}\delta$, and $|\eta| \le 32\eps^{3}$, we deduce
		\begin{align*}
			|\htheta_e^{*} - \theta_e| &\le \left|  4p_e\eps + (4p_e+2)\eps^2 +\eps^3 + 4\eps p_e^2 + \frac{\eta}{\eps} \right| \le C_{\ref{eqn:coord_ini_guarantee}} \delta^{2}
		\end{align*} 
		for some constant $C_{\ref{eqn:coord_ini_guarantee}}>0$. 
	\end{proof}

    \section*{Acknowledgements}

    HL was partially supported by NSF grant DMS-2206296. DC and SR were partially supported by the Institute for Foundations of Data Science (IFDS) through NSF grant DMS-2023239 (TRIPODS Phase II). It is also based upon work supported by the NSF under grant DMS-1929284 while one of the authors (SR) was in residence at the Institute for Computational and Experimental Research in Mathematics (ICERM) in Providence, RI, during the Theory, Methods, and Applications of Quantitative Phylogenomics semester program. SR was also supported by NSF grant DMS-2308495, as well as a Van Vleck Research Professor Award and a Vilas Distinguished Achievement Professorship.

\end{document}